\newtheorem{theorem}{Theorem}
\newtheorem{lemma}[theorem]{Lemma}
\newtheorem{corollary}[theorem]{Corollary}
\newtheorem{proposition}[theorem]{Proposition}
\theoremstyle{definition}
\newtheorem{definition}[theorem]{Definition}
\newtheorem{remark}[theorem]{Remark}
\newtheorem{example}[theorem]{Example}
\theoremstyle{remark}
\numberwithin{theorem}{section}
\newcommand{\alg}[1]{\mathfrak{#1}}
    \newcommand{\A}{\alg{A}}
    \newcommand{\B}{\alg{B}}
\renewcommand{\sp}[1]{\mathfrak{#1}}
    \newcommand{\X}{\sp{X}}
\newcommand{\fr}[1]{\mathcal{#1}}
    \newcommand{\F}{\fr{F}}
    \newcommand{\G}{\fr{G}}
\newcommand{\class}[1]{\mathcal{#1}}
    \newcommand{\V}{\class{V}}
    \renewcommand{\S}{\class{S}}
    \newcommand{\FH}{\sf{FH}}
\newcommand{\si}{_\mathrm{si}}
\newcommand{\fsi}{_\mathrm{fsi}}
\newcommand{\Log}{\mathsf{Log}}
\newcommand{\logic}[1]{\mathsf{#1}}
    \newcommand{\K}{\logic{K}}
    \newcommand{\Kf}{\logic{K4}}
    \newcommand{\Sf}{\logic{S4}}
    \newcommand{\KD}{\logic{KD}}
\newcommand{\NExt}[1]{\mathsf{NExt}{#1}}
\renewcommand{\H}{\mathsf{H}}
\renewcommand{\phi}{\varphi}
\newcommand{\emp}{\emptyset}
\newcommand{\goodbox}{\hspace{.2ex}\text{%
  \tikz[baseline=-.6ex, rounded corners=.01ex, line width=.12ex]
    {\draw(-.6ex,-.6ex) rectangle (.6ex,.6ex);}}\kern.2ex}
\newcommand{\gooddiamond}{\hspace{.2ex}\text{%
  \tikz[baseline=-.6ex, rounded corners=.01ex, rotate=45, line width=.12ex]
    {\draw(-.5ex,-.5ex) rectangle (.5ex,.5ex);}}\kern.2ex}
\renewcommand{\Box}{\goodbox}
\renewcommand{\Diamond}{\gooddiamond}
\newcommand{\Dia}{\Diamond}
\newcommand{\Boxx}[1]{\Box^{#1}}
\newcommand{\Lor}{\bigvee}
\newcommand{\Land}{\bigwedge}
\def\kdsecond{\tikz[baseline=.1ex]{
\draw(0,0.6ex) circle (2pt) coordinate (A);
\fill (3ex,0.6ex) circle (2pt) coordinate (B);
\draw[->, shorten <=0.4ex, shorten >=0.4ex] (A)-> (B);}
}
\newenvironment{acknowledgements}{%
  \begin{abstract}
}{%
  \end{abstract}
}
\renewcommand{\S}{\mathcal{S}}
\renewcommand{\H}{\mathcal{H}}
\renewcommand{\FH}{\mathcal{FH}}
\begin{document}

\title{Decidability of Being a Union-splitting}
\author{Tenyo Takahashi\footnote{\href{mailto:t.takahashi@uva.nl}{t.takahashi@uva.nl}}}
\affil{\small Institute for Logic, Language and Computation, \\ University of Amsterdam}
\date{}
\maketitle

\begin{abstract}
    Many logical properties are known to be undecidable for normal modal logics, with few exceptions such as consistency and coincidence with $\K$. This paper shows that the property of being a union-splitting in $\NExt{\K}$, the lattice of normal modal logics, is decidable, thus answering the open problem \cite[Problem 2]{handbookModalDecision2007}. This is done by providing a semantic characterization of union-splittings in terms of finite modal algebras. Moreover, by clarifying the connection to union-splittings, we show that in $\NExt{\K}$, having a decidable axiomatization problem and being a (un)decidable formula are also decidable. The latter answers \cite[Problem 17.3]{czModalLogic1997} for $\NExt{\K}$.\footnote{This paper is based on \cite[Chapter 5]{TakahashiThesis}.}
\end{abstract}

\section{Introduction}

Many logical properties have been shown to be undecidable for normal modal logics. The pioneering work by Thomason \cite{Thomason1982} showed the undecidability of Kripke completeness. This was followed by a series of works by Chagrov \cite{chagrov1990I, chagrov1990II, Chagrov2002}, which introduced a general method for showing undecidability. This method can be applied to show the undecidability of various logical properties, including the finite model property, first-order definability, decidability, tabularity, and the coincidence of a fixed tabular logic. For a comprehensive overview and additional references, we refer to \cite{handbookModalDecision2007} and \cite[Chapter 17]{czModalLogic1997}.

Given the generality of Chagrov's method, it might seem that all meaningful logical properties would be undecidable. Indeed, \cite{handbookModalDecision2007} pointed out that ``we know only two interesting decidable properties of finitely axiomatizable logics in $\NExt{\K}$: consistency and coincidence with $\K$.'' 

However, in this paper, we show that the property of being a union-splitting in $\NExt{\K}$ (the lattice of normal modal logics) is decidable, answering the open question \cite[Problem 2]{handbookModalDecision2007} in the affirmative. This property contracts consistency and the coincidence with $\K$ as there are continuum many union-splittings and continuum many non-union-splittings in $\NExt{\K}$. Our proof idea is to give a semantic characterization for a logic $\K + \phi$ to be a union-splitting in terms of finite modal algebras. This also yields the decidability of being a splitting in $\NExt{\K}$. Moreover, we observe that, for a formula $\phi$, $\K + \phi$ has a decidable axiomatization problem iff $\phi$ is a decidable formula iff $\K + \phi$ is a union-splitting or the inconsistent logic. Thus, the decidability of being a union-splitting implies that having a decidable axiomatization problem and being a (un)decidable formula are also decidable. The latter answers \cite[Problem 17.3]{czModalLogic1997} for $\NExt{\K}$ in the affirmative.

This paper is organized as follows. Section 2 introduces preliminaries, in particular, those on the decision problem of logical properties. Section 3 proves the main theorem, the decidability of being a union-splitting in $\NExt{\K}$. Section 4 discusses applications of the decidability result to axiomatization problems and (un)decidable formulas.

\section{Preliminaries}

We assume familiarity with the basics of normal modal logics and algebraic semantics (see, e.g., \cite{czModalLogic1997}). We will only deal with normal modal logics, so we also call them logics. For a logic $L_0$, $\NExt{L_0}$ denotes the complete lattice of all logics containing $L_0$, ordered by inclusion. Recall that $\K$ is the least normal modal logic, and $\NExt{\K}$ is the lattice of all normal modal logics. We will identify a logical property $P$ with the set of logics having $P$, and write $L \in P$ if the logic $L$ has $P$.

We will use the following notations. For a logic $L$, let $\V(L) = \{\A: \A \models L\}$, and for a modal algebra $\A$, let $\Log \A = \{\phi: \A \models \phi\}$. The letters $\H$ and $\S$ respectively denote the closure operator of taking homomorphic images and subalgebras. For a class $\class{K}$ of algebras, let $\class{K}\si$ be the class of subdirectly irreducible (s.i., for short) members of $\class{K}$ and $\class{K}\fsi$ be the finite s.i.~members of $\class{K}$. We abbreviate $\Box \cdots \Box \phi$ ($n$ times $\Box$) as $\Box^n \phi$ and $\phi \land \Box\phi \land \cdots \land \Box^n \phi$ as $\Box^{\leq n} \phi$. Similar abbreviations apply to the modal operation on modal algebras.

\subsubsection*{Decision problem of logical properties}

We refer to \cite[Chapter 17]{czModalLogic1997} and \cite{handbookModalDecision2007} for a detailed introduction and survey of the decision problem of logical properties for modal logic.
Intuitively, the question is: 
\begin{quote}
    Is there an algorithm that, given a modal logic, decides whether it has a specific property?
\end{quote}
There are different ways to formulate this question, depending on how logics are encoded as input. Note that an input must be a finite object, so it is certainly not possible to take all of the continuum many logics into account. The most general possible formulation is to consider all recursively axiomatizable logics, encoded by recursive functions that enumerate their theorems. However, Kuznetsov showed that this only leads to triviality, similar to Rice's Theorem for partial recursive functions. Kuznetsov left the result unpublished, but one can find a proof in \cite[Section 17.1]{czModalLogic1997}. The result also holds for $\NExt{\Kf}$, $\NExt{\Sf}$, and other lattices of normal modal logics.

\begin{theorem}[Kuznetsov]
    Let $P$ be a non-trivial property of recursively axiomatizable logics, that is, there are a recursively axiomatizable logic that has $P$ and a recursively axiomatizable logic that does not have $P$. Then it is undecidable whether a recursively axiomatizable logic has $P$. 
\end{theorem}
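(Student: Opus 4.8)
The plan is to mimic the proof of Rice's theorem for partial recursive functions, reducing the (undecidable) halting problem to the property $P$. The structural fact I would exploit is that every logic is contained in the inconsistent logic $\Fml$ (whose theorem set is the set of all formulas), and that $\Fml$ is itself trivially a recursively axiomatizable logic, hence a legitimate input to the decision problem. Recall also that a logic is recursively axiomatizable iff its theorem set is recursively enumerable, so an input is effectively just an index for an r.e. set of formulas that happens to be a logic.

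First I would branch on whether $\Fml \in P$. In either case, nontriviality supplies a recursively axiomatizable logic $L$ lying on the opposite side of $P$ from $\Fml$; since $\Fml$ already occupies one side, this $L$ is necessarily consistent, i.e.\ $L \neq \Fml$. Fix such an $L$ together with an algorithm enumerating its theorems, which exists because $L$ is recursively axiomatizable.

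Next, given a Turing machine $M$ and an input $x$, I would describe a procedure enumerating a set $L_{M,x}$ of formulas as follows: interleave the enumeration of the theorems of $L$ with the step-by-step simulation of $M$ on $x$, and if and when $M$ halts, begin additionally enumerating \emph{every} formula. This procedure is uniformly computable in $(M,x)$, so it yields an index effectively. The set $L_{M,x}$ is always a genuine recursively axiomatizable logic: if $M$ does not halt on $x$ it equals $L$, and if $M$ halts it equals $\Fml$. Thus the construction maps $(M,x)$ effectively to a \emph{valid} code for an input to the $P$-decision problem.

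Finally I would read off the reduction. If $\Fml \in P$ (so $L \notin P$), then $L_{M,x} \in P$ exactly when $M$ halts on $x$; if $\Fml \notin P$ (so $L \in P$), then $L_{M,x} \in P$ exactly when $M$ does not halt on $x$. In both cases a decision procedure for $P$ would decide halting, a contradiction. The step I expect to be the main obstacle is not the combinatorics but the bookkeeping of the encoding: one must verify that the interleaved ``escalate-to-inconsistency-upon-halting'' procedure always describes an honest recursively axiomatizable logic rather than an arbitrary r.e.\ set of formulas, so that the reduction lands inside the domain of admissible inputs. The same argument transfers verbatim to $\NExt{\Kf}$, $\NExt{\Sf}$, and any lattice of normal modal logics, since the inconsistent logic and the two consistent witnesses all remain within the lattice.
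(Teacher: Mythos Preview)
Your proposal is correct and follows the standard Rice-theorem template. Note, however, that the paper does not supply its own proof of this theorem: it merely attributes the result to Kuznetsov and refers the reader to \cite[Section~17.1]{czModalLogic1997}. The argument there is essentially the one you sketch---reduce the halting problem by enumerating a fixed consistent recursively axiomatizable logic $L$ on the opposite side of $P$ from the inconsistent logic, and escalating to the inconsistent logic upon detecting a halt---so your approach matches the intended proof. Your check that $L_{M,x}$ is always a genuine logic (either $L$ or $\Fml$), and hence a valid input under the stated encoding by an enumeration of theorems, is exactly the point that distinguishes this setting from the raw Rice theorem, and it is the right thing to verify.
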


So, it is a convention to restrict ourselves to finitely axiomatizable logics. Since most logics we encounter in practice are finitely axiomatizable, this is not a serious drawback. A finitely axiomatizable logic will be encoded by a finite set of formulas axiomatizing the logic, or equivalently, a single formula axiomatizing the logic.

\begin{definition}
    Let $L_0$ be a normal modal logic. A logical property $P$ is \emph{decidable} in $\NExt{L_0}$ iff the set $\{\phi: L_0 + \phi \in P\}$ is decidable.
\end{definition}

Quite a lot of properties are known to be undecidable in $\NExt{\K}$, including Kripke completeness, the finite model property, first-order definability, decidability, and tabularity (see, e.g., \cite{handbookModalDecision2007}). On the contrary, only very few interesting properties were known to be decidable in $\NExt{\K}$. The most interesting ones would be the consistency and the coincidence with $\K$ \cite{handbookModalDecision2007}.

\subsubsection*{Union-splittings and Jankov formulas}

The notion of splittings comes from lattice theory.

\begin{definition}
    Let $X$ be a complete lattice. A \emph{splitting pair} of $X$ is a pair $(x, y)$ of elements of $X$ such that $x \not\leq y$ and for any $z \in X$, either $x \leq z$ or $z \leq y$. If $(x, y)$ is a splitting pair of $X$, we say that $x$ \emph{splits} $X$ and $y$ is a \emph{splitting} in $X$. 
\end{definition}

This notion has been an important tool in the study of lattices of logics. See \cite[Section 10.7]{czModalLogic1997} for a historical overview. 

\begin{definition} 
    Let $L_0$ and $L$ be logics.
    \begin{enumerate}
        \item $L$ is a \emph{splitting} in $\NExt{L_0}$ iff it is a lattice-theoretic splitting in the lattice $\NExt{L_0}$.
        \item $L$ is a \emph{union-splitting} in $\NExt{L_0}$ iff it is the join of a set of splittings in $\NExt{L_0}$.
    \end{enumerate}
\end{definition}

Blok \cite{blok1978degree} identified splittings and union-splittings in $\NExt{\K}$ using \emph{Jankov formulas}, and proved the finite model property of them. An alternative proof can be found in \cite{stablecanonicalrules}. Recall that a modal algebra $\A$ is of \emph{height $\leq n$} if $\A \models \Box^{n+1}\bot$, or equivalently, $\Box^{n+1} 0 = 1$; $\A$ is of \emph{finite height} if it is of height $\leq n$ for some $n \in \omega$.

\begin{definition}
    Let $\A$ be a finite s.i.~modal algebra of height $\leq n$. The \emph{Jankov formula} $\epsilon(\A)$ associated with $\A$ is:
    \begin{align*}
        \epsilon(\A, D) &= (\Box^{n+1} \bot \land \Land \{\Boxx{\leq n} \gamma : \gamma \in \Gamma\}) \to \Lor \{ \Boxx{\leq n} \delta : \delta \in \Delta\} \\
                        &= (\Box^{n+1} \bot \land \Boxx{\leq n} \Land \Gamma) \to \Lor \{ \Boxx{\leq n} \delta : \delta \in \Delta\},
    \end{align*}
    where 
    \begin{align*}
        \Gamma  = & \{p_{a \lor b} \leftrightarrow p_a \lor p_b : a,b \in A\} \cup \\
        & \{\lnot p_a \leftrightarrow \lnot p_a : a \in A\} \cup \\
        & \{\Dia p_a \leftrightarrow p_{\Dia a} : a \in A\} \cup,
    \end{align*}
    and
    \begin{align*}
        \Delta = \{p_a : a \in A, a \ne 1\}.
    \end{align*}
\end{definition}

These formulas are called Jankov formulas because they have a similar semantic characterization as the formulas introduced by Jankov \cite{jankov1963} (see also \cite{dejongh1968}) for superintuitionistic logics.

\begin{proposition}
    Let $\A$ be a finite s.i.~modal algebra of finite height and $\B$ be a modal algebra. Then $\B \not\models \epsilon(\A)$ iff $\A$ is a subalgebra of a s.i. homomorphic image of $\B$.
\end{proposition}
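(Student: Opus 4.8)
The plan is to prove the biconditional by translating between refuting valuations of $\epsilon(\A)$ on $\B$ and embeddings of $\A$ into subdirectly irreducible quotients of $\B$. Throughout I will use the standard opremum characterization for finite height: a finite modal algebra of height $\leq n$ is s.i.\ iff the join of $\{\Boxx{\leq n} a : a \neq 1\}$ is $<1$, this join then being an ``opremum'' $c < 1$ bounding $\Boxx{\leq n} a$ for every $a \neq 1$. I will also use that in a height-$\leq n$ algebra the open filter generated by $x$ is the principal filter ${\up}\Boxx{\leq n} x$, and that $\Boxx{\leq n} x$ is then $\Box$-stable, i.e.\ $\Boxx{\leq n}\Boxx{\leq n} x = \Boxx{\leq n} x$.

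For $(\Leftarrow)$, suppose $\A$ is a subalgebra of a s.i.\ algebra $\B' \in \H(\B)$. Since validity is preserved under $\H$, it suffices to refute $\epsilon(\A)$ in $\B'$, and I would use the \emph{natural} valuation $v(p_a) = a$. Each conjunct of $\Gamma$ then becomes an equivalence between equal elements, so $v(\Boxx{\leq n}\Land\Gamma) = 1$; and because $\A$ is a subalgebra sharing $0$ and $\Box$ with $\B'$, the height assumption gives $v(\Boxx{n+1}\bot) = \Boxx{n+1} 0 = 1$. Thus the antecedent evaluates to $1$, while the consequent $\Lor\{\Boxx{\leq n} p_a : a \neq 1\}$ evaluates, since subalgebras preserve finite joins, to the opremum $c_\A < 1$. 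Hence $\epsilon(\A)$ takes value $c_\A \neq 1$ in $\B'$.

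For the harder direction $(\Rightarrow)$, fix a valuation $v$ on $\B$ refuting $\epsilon(\A)$, and let $d$ and $e$ be the values of the antecedent and consequent, so $d \not\leq e$. The crux — and what I expect to be the main obstacle — is that $\B$ itself need not have finite height, so the filter calculus above is unavailable. I would resolve this by passing to $\B^* = \B / {\up}\Boxx{n+1} 0$: from $0 \leq \Box 0$ one gets $\Boxx{n+1} 0 \leq \Boxx{n+2} 0$, so ${\up}\Boxx{n+1} 0$ is an open filter, $\B^*$ is a homomorphic image of $\B$, and $\B^*$ has height $\leq n$. Crucially, because $d \leq \Boxx{n+1} 0$, the quotient map $\pi$ preserves the strict inequality: $\pi(d) \leq \pi(e)$ would force $d = d \wedge \Boxx{n+1} 0 \leq e$, a contradiction. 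So the images $d^* = \pi(d)$ and $e^* = \pi(e)$ still satisfy $d^* \not\leq e^*$, and I may now reason inside a height-$\leq n$ algebra.

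In $\B^*$ the antecedent value $d^* = \Boxx{\leq n}\pi(v(\Land\Gamma))$ is $\Box$-stable, so the open filter it generates is $F^* = {\up} d^*$. Let $\mu < 1$ generate the monolith of $\A$. If the induced valuation $v^*(p_a) = \pi(v(p_a))$ put $v^*(p_\mu) \in F^*$, i.e.\ $v^*(p_\mu) \geq d^*$, then applying $\Boxx{\leq n}$ gives $\Boxx{\leq n} v^*(p_\mu) \geq d^*$; as $\mu \neq 1$ this disjunct lies below $e^*$, forcing $d^* \leq e^*$, a contradiction. Hence $v^*(p_\mu) \notin F^*$. In $\B_1 = \B^*/F^*$ the map $f(a) = [v^*(p_a)]$ is a homomorphism $\A \to \B_1$, since $d^* \leq v^*(\gamma)$ for each $\gamma \in \Gamma$ forces the diagram to hold, and $f(\mu) \neq 1$. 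Finally I would write $\B_1$ as a subdirect product of s.i.\ algebras and select a factor $\B_2$ in which the image of $f(\mu)$ is still $\neq 1$; as $\A$ is s.i.\ with monolith generated by $\mu$, the induced map $\A \to \B_2$ has trivial kernel, hence is an embedding, and $\B_2 \in \H(\B_1) \subseteq \H(\B)$ is s.i. The only delicate point is the finite-height reduction preserving $d \not\leq e$; once height $\leq n$ is secured, the $\Box$-stability of $d^*$, the separation of $v^*(p_\mu)$, and the extraction of a subdirectly irreducible factor are all routine.
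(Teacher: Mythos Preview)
The paper does not actually supply a proof of this proposition: it is stated as a known characterization (the modal analogue of Jankov's lemma) and immediately used. So there is no ``paper's own proof'' to compare against.

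That said, your argument is correct and is the standard one. A couple of small remarks. In the $(\Leftarrow)$ direction you never use that $\B'$ is s.i.; the refutation of $\epsilon(\A)$ under the natural valuation only needs that $\A$ itself is s.i.\ (so that $\Lor_{a\neq 1}\Boxx{\leq n}a<1$ in $\A$, hence in $\B'$). In the $(\Rightarrow)$ direction, your separation argument for $v^*(p_\mu)\notin F^*$ in fact works verbatim for every $a\neq 1$, so the map $f\colon \A\to \B_1$ is already injective before you pass to a subdirectly irreducible factor; the monolith is only needed to guarantee that injectivity survives the final projection onto $\B_2$. Finally, note that the paper's displayed $\Gamma$ contains the visible typo $\lnot p_a\leftrightarrow\lnot p_a$; your proof (correctly) reads this as $p_{\lnot a}\leftrightarrow\lnot p_a$, which is what is needed for $f$ to preserve complementation.
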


\begin{theorem}[\cite{blok1978degree}] \label{Thm union splitting and Jankov formula}
    Let $L$ be a logic. 
    \begin{enumerate}
        \item $L$ is a splitting in $\NExt{\K}$ iff $L$ is axiomatizable by a Jankov formula of a finite s.i.~modal algebra of finite height. 
        \item $L$ is a union-splitting in $\NExt{\K}$ iff $L$ is axiomatizable by Jankov formulas of finite s.i.~modal algebras of finite height. 
    \end{enumerate}
\end{theorem}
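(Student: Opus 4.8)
My plan is to reduce both parts to the single equivalence, call it $(\star)$, that for a finite subdirectly irreducible $\A$ of finite height and every logic $L$,
\[ \epsilon(\A)\in L \iff \A\notin\V(L). \]
To prove $(\star)$ I would first apply the Proposition to $\B=\A$: since $\A$ is a subalgebra of itself and is subdirectly irreducible, $\A\in\S\H\si(\A)$, so $\A\not\models\epsilon(\A)$. For the implication $\A\in\V(L)\Rightarrow\epsilon(\A)\notin L$ I argue contrapositively — as $\A$ refutes $\epsilon(\A)$ and $\V(L)$ is exactly the class of algebras validating $L$, the formula $\epsilon(\A)$ cannot be a theorem of $L$. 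Conversely, if $\epsilon(\A)\notin L$, algebraic completeness gives some $\B\in\V(L)$ with $\B\not\models\epsilon(\A)$; the Proposition yields $\A\in\S\H\si(\B)$, and since $\V(L)$ is a variety (closed under $\H$ and $\S$, and a subdirectly irreducible homomorphic image is in particular a homomorphic image) we get $\A\in\V(L)$. Recording the trivialities $\A\in\V(L)\iff L\subseteq\Log\A$ and $\epsilon(\A)\in L\iff\K+\epsilon(\A)\subseteq L$ converts $(\star)$ into a clean dichotomy: for every $L$, exactly one of $\K+\epsilon(\A)\subseteq L$ and $L\subseteq\Log\A$ holds.

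For the easy direction of (1), this dichotomy is precisely what shows that $(\K+\epsilon(\A),\Log\A)$ is a splitting pair, so that $\K+\epsilon(\A)$ splits $\NExt\K$ with companion $\Log\A$: for any $L$, $(\star)$ gives either $\A\in\V(L)$ and hence $L\subseteq\Log\A$, or $\A\notin\V(L)$ and hence $\K+\epsilon(\A)\subseteq L$; and $\K+\epsilon(\A)\not\subseteq\Log\A$ because $\A\not\models\epsilon(\A)$ witnesses $\epsilon(\A)\notin\Log\A$. Thus every $\K+\epsilon(\A)$ with $\A$ finite subdirectly irreducible of finite height is a splitting.

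The converse of (1) is the substantial direction. Suppose $L$ splits $\NExt\K$ with companion $M$, so $L\not\subseteq M$ and every logic is $\supseteq L$ or $\subseteq M$. Translating to varieties, every variety $\V'$ satisfies $\V'\subseteq\V(L)$ or $\V'\supseteq\V(M)$, with $\V(M)\not\subseteq\V(L)$. As varieties are generated by their subdirectly irreducible members, I can choose a subdirectly irreducible $\A\in\V(M)\setminus\V(L)$; then $\HSP(\A)\not\subseteq\V(L)$, so the dichotomy forces $\V(M)\subseteq\HSP(\A)\subseteq\V(M)$, i.e.\ $M=\Log\A$. The crux is then to prove this $\A$ is \emph{finite} and of \emph{finite height}. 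Finiteness I would extract from the observation that the dichotomy makes $\A$ a member of $\HSP(\B)$ for every $\B\notin\V(L)$; via Jónsson's lemma for the congruence-distributive variety of modal algebras this forces $\A$ to be finitely generated, and finite generation together with the bound supplied by finite height forces finiteness. Finite height is the most delicate point: for a subdirectly irreducible algebra of infinite height one expects to build a variety that is neither below $\V(L)$ nor above $\HSP(\A)$ — exploiting the bounded-depth logics $\K+\Box^{n+1}\bot$ and the frames they omit — contradicting the splitting dichotomy; this is exactly the place where Blok's analysis of the degree of incompleteness enters, and I regard it as the main obstacle of the whole argument. Once $\A$ is known to be finite subdirectly irreducible of finite height, the easy direction gives a splitting pair $(\K+\epsilon(\A),\Log\A)$ with the same companion $M=\Log\A$; since the partner of a given element of a splitting pair is unique (it is the meet of all logics not contained in $M$), I conclude $L=\K+\epsilon(\A)$.

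Part (2) should then follow from (1) with little extra work. If $L$ is a union-splitting, it is by definition a join $\bigvee_{i\in I}L_i$ of splittings; by (1) each $L_i=\K+\epsilon(\A_i)$ for a finite subdirectly irreducible $\A_i$ of finite height, and this join is the smallest logic containing every $\epsilon(\A_i)$, namely $\K+\{\epsilon(\A_i):i\in I\}$, so $L$ is axiomatized by Jankov formulas of finite subdirectly irreducible algebras of finite height. Conversely, if $L=\K+\{\epsilon(\A_i):i\in I\}$ with each $\A_i$ finite subdirectly irreducible of finite height, then by (1) each $\K+\epsilon(\A_i)$ is a splitting and $L$ is visibly their join, hence a union-splitting. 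Everything here beyond the finiteness-and-finite-height step is bookkeeping around $(\star)$ and the uniqueness of splitting partners.
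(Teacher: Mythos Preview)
The paper does not prove this theorem: it is quoted from Blok with a pointer to an alternative proof elsewhere, so there is no in-paper argument to compare against and your proposal must be judged on its own.

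Your treatment of $(\star)$, of the easy direction of (1), and of the reduction of (2) to (1) is correct and standard. Two comments on the hard direction of (1).

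First, your finiteness argument is garbled. J\'onsson's lemma applied to an arbitrary $\B\notin\V(L)$ gives $\A\in\H\S\PU(\B)$, which does not by itself yield finite generation, and your phrase ``finite generation together with the bound supplied by finite height forces finiteness'' is circular since finite height is precisely what remains to be shown. The clean argument is: from $L\supsetneq\K$ pick $\phi\in L\setminus\K$; by the finite model property of $\K$ there is a \emph{finite} $\B$ with $\B\not\models\phi$, so $\B\notin\V(L)$, whence $\V(M)\subseteq\HSP(\B)$ by the splitting dichotomy; now J\'onsson's lemma (or a direct count) gives $\A\in\H\S(\B)$, so $\A$ is finite outright, with no appeal to finite height.

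Second, and more seriously, the finite-height step is not just ``delicate'' --- it is the entire content of the theorem, and you have not proved it. Observing that ``this is exactly the place where Blok's analysis of the degree of incompleteness enters'' is an accurate diagnosis but not an argument. What is actually needed is a construction: given a finite s.i.\ $\A$ whose dual frame contains a non-trivial cycle, one must exhibit a logic that lies neither above any candidate $L$ nor below $\Log\A$, so that $\Log\A$ cannot be a splitting companion. Until that construction (or an equivalent one) is carried out, the converse of (1) is not established in your write-up; everything else you wrote is routine bookkeeping around $(\star)$.
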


\begin{theorem}[\cite{blok1978degree}] \label{Thn K-union-splittings have fmp}
    Every union-splitting in $\NExt{\K}$ has the finite model property.
\end{theorem}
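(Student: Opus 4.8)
The plan is to combine the syntactic description of union-splittings in \cref{Thm union splitting and Jankov formula} with the semantic meaning of Jankov formulas and a depth-bounded filtration. By that theorem we may write $L = \K + \{\epsilon(\A_i) : i \in I\}$ with each $\A_i$ a finite s.i.\ algebra of finite height, and by the proposition above this says exactly that $\V(L) = \{\B : \A_i \notin \S\H\si(\B)\ \text{for all}\ i \in I\}$. To prove the finite model property it suffices to show $L = \Log(\V(L)\fin)$, i.e.\ that every non-theorem is refuted on a \emph{finite} member of $\V(L)$. So fix $\psi \notin L$. Some algebra in $\V(L)$ refutes $\psi$; decomposing it subdirectly and keeping a factor that refutes $\psi$, I obtain a subdirectly irreducible $\B \in \V(L)$ with $\B \not\models \psi$. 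Passing to its dual, I get a rooted model refuting $\psi$ at the root.

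Next I would build the finite refuting algebra by a selective filtration of this model down to depth $m := \deg(\psi)$, retaining genuine witnesses for the relevant $\Dia$-subformulas at each of the $\le m$ layers. This yields a finite rooted frame $\mathfrak{F}'$ of depth $\le m$ that still refutes $\psi$ (by the filtration lemma), and whose dual algebra therefore refutes $\psi$. Since $\mathfrak{F}'$ automatically validates $\K$, the whole game reduces to showing $\mathfrak{F}' \models \epsilon(\A_i)$ for every $i$, i.e.\ that the finite algebra dual to $\mathfrak{F}'$ lies in $\V(L)$.

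The key step is this last verification, and it is precisely where finite height is indispensable. Suppose $\mathfrak{F}' \not\models \epsilon(\A_i)$ for some $i$; then by the proposition $\A_i$ embeds into a s.i.\ homomorphic image of the algebra of $\mathfrak{F}'$, so dually the (finite, depth-$n$) frame $\mathfrak{A}_i$ of $\A_i$ is a p-morphic image of a rooted generated subframe of $\mathfrak{F}'$. Because a surjective bounded morphism cannot increase depth, the height $n$ of $\A_i$ is bounded by the depth of $\mathfrak{F}'$, hence $n \le m$. This is the crucial consequence of finite height: the obstruction $\mathfrak{A}_i$ is confined to the top $n \le m$ layers, which are exactly the layers the selective filtration reflects faithfully back into the original model. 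I would therefore pull the refuting valuation for $\epsilon(\A_i)$ (equivalently, the p-morphism onto $\mathfrak{A}_i$) back along the filtration correspondence, layer by layer through depth $n$, to obtain $\A_i \in \S\H\si(\B)$. This contradicts $\B \in \V(L)$, so no $\epsilon(\A_i)$ can fail on $\mathfrak{F}'$, and $\mathfrak{F}'$ is the desired finite $L$-algebra refuting $\psi$.

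The hard part is entirely contained in that pullback. The naive worry is circular—controlling the offending $\A_i$ seems to require a larger filtration, which could admit taller offenders—and the finite-height hypothesis is exactly what breaks the loop, since the depth at which an offender can be witnessed is tied to its \emph{height} rather than to the cardinality of $\mathfrak{F}'$, and selective filtration caps that depth by $\deg(\psi)$ once and for all. Making the pullback precise requires some care with admissible sets and with checking that the reflected map really is a bounded morphism onto $\mathfrak{A}_i$ (and that the conjunct $\Box^{n+1}\bot$ of $\epsilon(\A_i)$ is respected, which is why one selects to depth a touch beyond $m$); everything else—the subdirect reduction, the refutation-preserving filtration, and the frame/algebra translation via the proposition—is routine.
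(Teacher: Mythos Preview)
The paper does not itself prove this theorem; it is stated with attribution to \cite{blok1978degree}, with a pointer to an alternative proof in \cite{stablecanonicalrules}. So there is no in-paper argument to compare your sketch against.

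Your strategy is natural and correctly isolates the role of finite height, but the pullback step --- which you rightly flag as the crux --- has a genuine gap as written. A depth-$m$ selective filtration manufactures dead ends at its last layer and drops successors at every layer; these artifacts can make $\epsilon(\A_i)$ fail on $\mathfrak{F}'$ even though it holds on the dual of $\B$. Concretely, take $L = \K + \epsilon(\A)$ with $\A$ the two-element algebra dual to a single irreflexive point (so $L = \KD$): the $L$-frames are exactly the serial ones, yet any depth-$m$ selective filtration of a serial rooted frame has dead ends at level $m$ and therefore refutes $\epsilon(\A)$. Your remedy of selecting ``to depth a touch beyond $m$'' does not help: wherever you truncate, the boundary reproduces the same artifact for algebras of matching height, and the bound $n \le m$ you derive says nothing about \emph{where} in $\mathfrak{F}'$ the refutation of $\epsilon(\A_i)$ sits --- it will typically sit against the truncation boundary, not in layers that the filtration ``reflects faithfully''. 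Extending the p-morphism from $\mathfrak{F}''$ onto the dual of $\A_i$ to a p-morphism out of a generated subframe of the dual of $\B$ is likewise not routine, since $\mathfrak{F}''$ omits $R$-successors that the back condition forces you to account for. The known proofs supply a genuinely different ingredient at exactly this point, and your sketch does not yet provide one.
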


A finitely axiomatizable logic with the finite model property is decidable, known as Harrop's theorem (see, e.g., \cite[Theorem 16.13]{czModalLogic1997}).

\begin{corollary} \label{Cor fa K-union-splittings are decidable}
    Every finitely axiomatizable union-splitting in $\NExt{\K}$ is decidable.
\end{corollary}

\section[\texorpdfstring{Decidability of being a union-splitting in $\NExt{\K}$}{Decidability of being a union-splitting in NExtK}]{Decidability of being a union-splitting in $\NExt{\K}$} \label{Sec decidability of K-union-splittings}

In this section, we prove the decidability of being a union-splitting in $\NExt{\K}$. We first give a semantic characterization of union-splittings in $\NExt{\K}$. Let $\FH$ be the class of modal algebras of finite height, that is, 
\begin{align*}
    \FH &= \{\A: \exists n \in \omega  (\A \models \Box^n\bot)\} \\
        &= \{\A: \exists n \in \omega  (\Box^n 0 = 1)\}.
\end{align*}

\begin{lemma} \label{Lem stable subalgebra reflects fh}
    Let $\A$ be a modal algebra and $\B$ be a subalgebra of $\A$. If $\B$ is of finite height, then so is $\A$.
\end{lemma}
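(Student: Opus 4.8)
The plan is to exploit that the condition witnessing a given height bound is an equation between \emph{closed} terms, and that such equations are absolute between a subalgebra and the ambient algebra. Recall that $\A$ has finite height precisely when the closed term $\Box^n 0$ evaluates to $1$ in $\A$ for some $n$, where $0,1$ are the constants of the signature and $\Box$ is the unary modal operation; there are no free variables involved, only the constant $0$ and iterated applications of $\Box$.

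First I would record the relevant subalgebra facts: since $\B$ is a subalgebra of $\A$, the two algebras share their constants ($0_\B = 0_\A$ and $1_\B = 1_\A$), the universe $B$ is closed under $\Box$, and $\Box$ on $\B$ is the restriction of $\Box$ on $\A$. Using these I would show by induction on $k$ that the value of $\Box^k 0$ computed in $\B$ agrees with its value computed in $\A$: the base case is the shared constant $0$, and in the inductive step $\Box^k 0$ already lies in $B$, so applying $\Box$ yields the same element whether one works in $\B$ or in $\A$. Thus the entire ``bottom column'' $0, \Box 0, \Box^2 0, \dots$ lives inside $B$ and is evaluated identically in both algebras.

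The lemma then follows at once. Assuming $\B$ has finite height, fix $n$ with $\Box^n 0 = 1$ holding in $\B$. By the previous paragraph the same element $\Box^n 0$ is obtained in $\A$, and since $1_\B = 1_\A$ it still equals $1$ there; hence $\A \models \Box^n \bot$, so $\A$ has finite height (indeed with the very same bound $n$).

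I do not expect a genuine obstacle: the single point that needs checking is that the bottom column never leaves $B$, which is exactly what $0 \in B$ and closure of $B$ under $\Box$ guarantee. Conceptually this is an instance of the general fact that equations between variable-free terms transfer in \emph{both} directions between an algebra and any of its subalgebras; what makes the statement mildly noteworthy is that finite height, although normally thought of as inherited \emph{downward} to subalgebras, is through its closed-term formulation also reflected \emph{upward} from a subalgebra to the whole algebra.
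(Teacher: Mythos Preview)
Your proposal is correct and is essentially the paper's own argument: the paper phrases it via an embedding $i\colon \B \to \A$ and uses that $i$ commutes with $\Box$ and preserves the constants to get $\Box^n 0_\A = i(\Box^n 0_\B) = i(1_\B) = 1_\A$, which is exactly your induction on the closed term $\Box^k 0$ written in map form. The only cosmetic difference is that you spell out the induction explicitly while the paper appeals in one line to $i$ being an embedding.
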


\begin{proof}
    Let $i: \B \to \A$ be an embedding. Suppose $\B$ is of finite height. Then there is some $n \in \omega$ such that $\Box^n 0_\B = 1_\B$. So, $i(\Box^n 0_\B) = i(1_\B)$, and hence $\Box^n 0_\A = 1_\A$ since $i$ is an embedding. Thus, $\A$ is of finite height.
\end{proof}

\begin{remark}
    In the proof, it suffices to assume that $i(\Box b) \leq \Box i(b)$ for all $b \in \B$. So, we may weaken the assumption so that $\B$ is a \emph{stable subalgebra} of $\A$ (cf. \cite[Definition 2.1]{stablemodallogic}). But this is not needed for our purpose. 
\end{remark}

\begin{theorem} \label{Thm characterization of K-union-splittings}
    For any modal logic $L$, the following are equivalent:
    \begin{enumerate}
        \item $L$ is a union-splitting in $\NExt{\K}$,
        \item $L$ is axiomatized over $\K$ by Jankov formulas of finite s.i.~modal algebras of finite height,
        \item For any modal algebra $\A$, $\H(\A)\si \cap \FH \subseteq \V(L)$ implies $\A \in \V(L)$,
        \item  For any finite modal algebra $\A$, $\H(\A)\fsi \cap \FH \subseteq \V(L)$ implies $\A \in \V(L)$,
    \end{enumerate}
\end{theorem}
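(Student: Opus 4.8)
The plan is to invoke Blok's theorem (\Cref{Thm union splitting and Jankov formula}(2)) for the equivalence (1)$\Leftrightarrow$(2), which reduces the task to proving the cycle (2)$\Rightarrow$(3)$\Rightarrow$(4)$\Rightarrow$(2). For (2)$\Rightarrow$(3), suppose $L = \K + \{\epsilon(\alg{C}_i) : i \in I\}$ for finite s.i.\ algebras $\alg{C}_i$ of finite height, and assume $\H(\A)\si \cap \FH \subseteq \V(L)$. If $\A \not\models L$, then $\A \not\models \epsilon(\alg{C}_i)$ for some $i$, so by the characterization of when a Jankov formula is refuted there is a s.i.\ homomorphic image $\B$ of $\A$ of which $\alg{C}_i$ is a subalgebra. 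Since $\alg{C}_i$ has finite height, \Cref{Lem stable subalgebra reflects fh} forces $\B$ to have finite height, so $\B \in \H(\A)\si \cap \FH \subseteq \V(L)$ and hence $\B \models \epsilon(\alg{C}_i)$. But $\alg{C}_i$ is a subalgebra of the s.i.\ algebra $\B$, hence a subalgebra of a s.i.\ homomorphic image of $\B$, which gives $\B \not\models \epsilon(\alg{C}_i)$ --- a contradiction. Thus $\A \models L$.

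Next, (3)$\Rightarrow$(4) is immediate: for a finite $\A$ every homomorphic image is finite, so $\H(\A)\si = \H(\A)\fsi$, and (4) is just the restriction of (3) to finite algebras. The substance is (4)$\Rightarrow$(2). I would set
\[
  L' = \K + \{\, \epsilon(\alg{C}) : \alg{C} \text{ finite s.i.\ of finite height, } \alg{C} \not\models L \,\},
\]
which is a union-splitting, hence axiomatized as in (2), and which by \Cref{Thn K-union-splittings have fmp} has the finite model property. The inclusion $L' \subseteq L$ follows from the general fact that, for a finite s.i.\ algebra $\alg{C}$ of finite height, $\epsilon(\alg{C}) \in L$ iff $\alg{C} \notin \V(L)$ --- a consequence of the refutation characterization together with closure of $\V(L)$ under $\H$ and $\S$; every axiom of $L'$ therefore lies in $L$.

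For $L \subseteq L'$, take $\phi \in L$; by the finite model property of $L'$ it suffices to show $\A \models \phi$ for every finite $\A \models L'$, and since $\phi \in L$ it is enough to prove $\A \models L$. Here I would invoke (4), so I must verify $\H(\A)\fsi \cap \FH \subseteq \V(L)$. Given $\B \in \H(\A)\fsi \cap \FH$, if $\B \not\models L$ then $\epsilon(\B)$ is one of the axioms of $L'$, so $\A \models \epsilon(\B)$; but $\B$ is a s.i.\ homomorphic image of $\A$, whence $\A \not\models \epsilon(\B)$, a contradiction. Thus $\B \models L$, condition (4) yields $\A \models L$, and therefore $L = L'$ is a union-splitting satisfying (2).

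The main obstacle is the direction (4)$\Rightarrow$(2), where a purely finite hypothesis must be converted into a statement about the entire, possibly infinitely axiomatized, logic $L$. The decisive leverage is Blok's finite model property for union-splittings (\Cref{Thn K-union-splittings have fmp}): it is what lets the membership problem $\phi \in L'$ be reduced to finite algebras, the only place where condition (4) and the semantic behaviour of Jankov formulas can be applied. Care is also needed in the two auxiliary facts about Jankov formulas --- that $\epsilon(\alg{C}) \in L \Leftrightarrow \alg{C} \notin \V(L)$, and that a finite s.i.\ homomorphic image of $\A$ refutes its own Jankov formula on $\A$ --- both of which rest on the stated characterization of when $\epsilon(\alg{C})$ is refuted together with the closure of the variety $\V(L)$ under $\H$ and $\S$.
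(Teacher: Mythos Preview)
Your proposal is correct and follows essentially the same route as the paper: invoke Blok's theorem for (1)$\Leftrightarrow$(2), prove (2)$\Rightarrow$(3) via the Jankov refutation criterion together with \Cref{Lem stable subalgebra reflects fh}, note (3)$\Rightarrow$(4) is trivial, and for (4)$\Rightarrow$(2) define the same auxiliary logic $L'$ and prove $L = L'$ using the finite model property of $L'$ (\Cref{Thn K-union-splittings have fmp}) for one inclusion and closure of $\V(L)$ under $\H,\S$ for the other. The only cosmetic difference is that you phrase (2)$\Rightarrow$(3) as a proof by contradiction from the hypothesis rather than as the contrapositive, and you state the $L' \subseteq L$ direction via the equivalence $\epsilon(\alg{C}) \in L \Leftrightarrow \alg{C} \notin \V(L)$ explicitly, whereas the paper unpacks this inline.
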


\begin{proof} \leavevmode \par
    $(1) \Leftrightarrow (2)$: This is the second item of \Cref{Thm union splitting and Jankov formula}.
    
    $(2) \Rightarrow (3)$: Suppose that $L = \K + \{\epsilon(\B_i): i \in I\}$ where each $\B_i$ is a finite s.i.~modal algebra of finite height. Let $\A$ be a modal algebra such that $\A \not\models L$. We show that $\H(\A)\si \cap \FH \not\subseteq \V(L)$. Since $\A \not\models L$, $\A \not\models \epsilon(\B_i)$ for some $i \in I$. So, $\B_i$ is a subalgebra of a s.i.~homomorphic image $\A'$ of $\A$. Since $\B_i$ is of finite height, it follows from \Cref{Lem stable subalgebra reflects fh} that $\A'$ is also of finite height. Thus, $\A' \in \H(\A)\si \cap \FH$. However, since $\B_i$ is a subalgebra of $\A'$ and $\A'$ is a s.i.~homomorphic image of itself, $\A' \not\models \epsilon(\B_i)$. Hence, $\A' \not\models L$, namely, $\A' \notin \V(L)$. So, $\H(\A)\si \cap \FH \not\subseteq \V(L)$. 
    
    $(3) \Rightarrow (4)$: This is clear because for any finite modal algebra $\A$, $\H(\A)\fsi = \H(\A)\si$.

    $(4) \Rightarrow (2)$: Suppose that (4) holds. Let 
    \[L' = \K + \{\epsilon(\B): \B \in \FH\fsi, \B \not\models L\}.\] 
    It suffices to show that $L = L'$.

    Since $L'$ is a union-splitting by \Cref{Thm union splitting and Jankov formula}, it has the finite model property by \Cref{Thn K-union-splittings have fmp}. If $L \not\subseteq L'$, then by the finite model property of $L'$, there is a finite modal algebra $\A$ such that $\A \models L'$ and $\A \not\models L$. So, by (4), there is some $\B \in \H(\A)\fsi \cap \FH$ such that $\B \not\models L$. Since $\B \not\models \epsilon(\B)$ and $\epsilon(\B) \in L'$ by the definition of $L'$, $\B \not\models L'$. Since $\H$ preserves validity, we have $\A \not\models L'$, which is a contradiction. Thus, $L \subseteq L'$. 
    
    If $L' \not\subseteq L$, then there is a modal algebra $\A$ such that $\A \models L$ and $\A \not\models L'$. By the definition of $L'$, $\A \not\models \epsilon(\B)$ for some $\B \in \FH\fsi$ such that $\B \not\models L$. So, $\B$ is a subalgebra of a s.i.~homomorphic image $\A'$ of $\A$. Since $\H$ and $\S$ preserve validity, $\A \not\models L$, which is a contradiction. Thus, $L' \subseteq L$, and therefore, $L = L'$. 
\end{proof}

In practice, the characterization is particularly useful when combined with modal duality. Recall that a finite modal algebra is of finite height iff its dual space is cycle-free. 

\begin{example} \leavevmode
    \begin{enumerate}
        \item $\KD = \K + \Dia \top$ is the largest union-splitting in $\NExt{\K}$. 

        Let $\A$ be a finite modal algebra such that $\A \not\models \KD$ and $\X = (X, R)$ be its dual space. Then, there is a point $x \in \X$ such that $\X, x \not\models \Dia \top$, that is, $x$ is a dead end in $\X$. So, $\{x\}$ is a finite rooted cycle-free closed upset of $\X$, and thus corresponds to an algebra $\A' \in \H(\A)\fsi \cap \FH$. Also, it is clear that $\A' \not\models \KD$ since $x$ is a dead end. Thus, for any finite modal algebra $\A$, $\A \notin \V(\KD)$ implies $\H(\A)\fsi \cap \FH \not\subseteq \V(\KD)$. Hence, $\KD$ is a union-splitting by \Cref{Thm characterization of K-union-splittings}.
        
        Moreover, let $L$ be a union-splitting in $\NExt{\K}$. For any finite $\KD$-algebra $\A$, since the dual space of $\A$ is serial, there is no cycle-free closed upset of $\A$, which implies that $\H(\A)\fsi \cap \FH = \emp$, thus $\A \in \V(L)$ by \Cref{Thm characterization of K-union-splittings}. Since $\KD$ has the finite model property, it follows that $\V(\logic{KD}) \subseteq \V(L)$, namely, $L \subseteq \logic{KD}$. So, $\logic{KD}$ is the largest union-splitting in $\NExt{\K}$.
        
        \item $\K + \Box \Dia \top$ is not a union-splitting in $\NExt{\K}$. Let $\X$ be the finite modal space $\kdsecond$ and $\A$ be its dual algebra. Clearly, $\A \not\models \Box\Dia\top$. On the other hand, since the only cycle-free upset of $\X$ is the irreflexive singleton, which validates $\Box\Dia\top$, it holds that $\H(\A)\fsi \cap \FH \subseteq \V(\K + \Box \Dia \top)$. So, $\K + \Box \Dia \top$ is not a union-splitting by \Cref{Thm characterization of K-union-splittings}.
    \end{enumerate}
\end{example}

The condition (4) in \Cref{Thm characterization of K-union-splittings} is special in its finitary nature. Subframe logics (\cite{fineK4II}, see also \cite[Section 11.3]{czModalLogic1997}) and stable logics (\cite{stablemodallogic}) are two kinds of logics that have similar semantic characterizations. However, the following example shows that there is a logic $L \in \NExt{\Kf}$ such that the class of finite rooted $L$-spaces is closed under subframes, while $L$ is not a subframe logic. Also, several characterizations of stable logics are obtained in \cite{stablemodallogic} (see also \cite{FiltrationRevisitedLattices2018}), but none of them is completely finitary like the condition (4) in \Cref{Thm characterization of K-union-splittings}. An example for stable logics similar to the one below can be found in \cite[Example 5.9]{TakahashiThesis}.

\begin{example}
    Let $\F$ be the Kripke frame of negative integers with the order $<$ and a reflexive root $\omega$ at the bottom. Let $L = \Log\F$.
    \begin{figure}[H]
        \centering
        \begin{tikzpicture}[scale=1.5]
            \node (0) at (0,0) {\(\quad \bullet \:\: 0 \:\)};
            \node (-1) at (0,-1) {\(\quad \quad \bullet \: -1 \:\)};
            \node (-2) at (0,-2) {\( \rotatebox[origin=c]{90}{$\cdots$}\)};
            \node (-3) at (0,-2.5) {\(\quad \circ \:\: \omega \)};
            \draw(-1) -- (0);
            \draw(-2) -- (-1);
            \node at (0,-3) {\(\F\)};
        \end{tikzpicture}
    \end{figure}
    
    For any finite rooted frame $\G$, if $\G \models L$, then $\gamma(\G) \notin L$ since $\G \not\models \gamma(\G)$. So, $\F \not\models \gamma(\G)$, which means that $\G$ is a p-morphic image of a generated subframe of $\F$. Note that a p-morphism cannot identify two irreflexive points in $\F$. So, since $\G$ is finite, $\G$ must be a finite irreflexive chain. Thus, the class of finite rooted $L$-frames is the class of finite irreflexive chains, which is closed under subframes. However, $\F$ has the single reflexive point as its subframe, which refutes $L$. So, the class of $L$-frames is not closed under subframes, hence $L$ is not a subframe logic. 
\end{example}

Now we turn to our main theorem, the decidability of being a union-splitting in $\NExt{\K}$. This answers the open question \cite[Problem 2]{handbookModalDecision2007} in the affirmative. 

\begin{lemma} \label{Lem finite union splitting}
    Let $L_0$ be a logic and $\phi$ be a formula. If $L_0 + \phi = L_0 + \{\psi_i: i \in I\}$, then there is a finite subset $I' \subseteq I$ such that $L_0 + \phi = L_0 + \{\psi_i: i \in I'\}$.
\end{lemma}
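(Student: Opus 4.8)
The statement is a compactness-type assertion: if the logic $L_0 + \phi$ coincides with $L_0 + \{\psi_i : i \in I\}$, then finitely many of the $\psi_i$ already suffice. Notice this is essentially a finitary consequence of the fact that normal modal logics are closed under a finitary deduction notion. The plan is to exploit the direction that is immediate and then extract finiteness from derivability of a single formula.

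\medskip

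\emph{The approach.} First I would observe that $L_0 + \{\psi_i : i \in I\}$ is, by definition, the smallest normal modal logic containing $L_0$ and all the $\psi_i$; concretely, a formula $\chi$ belongs to it iff $\chi$ is derivable from $L_0 \cup \{\psi_i : i \in I\}$ using the rules of modus ponens, necessitation, and uniform substitution together with the axioms of $\K$. From the hypothesis $L_0 + \phi = L_0 + \{\psi_i : i \in I\}$ we get in particular that $\phi \in L_0 + \{\psi_i : i \in I\}$. Hence $\phi$ has a \emph{derivation} from $L_0$ together with the $\psi_i$. The crucial point is that any single derivation is a \emph{finite} object: it uses only finitely many instances of the axioms and finitely many of the premises $\psi_i$. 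Let $I' \subseteq I$ be the (finite) set of indices of those $\psi_i$ actually appearing in this derivation.

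\medskip

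\emph{Key steps, in order.} (i) From $\phi \in L_0 + \{\psi_i : i \in I\}$, extract a finite derivation of $\phi$ and let $I'$ be the finite index set of premises it uses, so that $\phi \in L_0 + \{\psi_i : i \in I'\}$. (ii) Since $I' \subseteq I$, monotonicity of the operator $L_0 + (\cdot)$ gives $L_0 + \{\psi_i : i \in I'\} \subseteq L_0 + \{\psi_i : i \in I\} = L_0 + \phi$. (iii) Conversely, because $\phi \in L_0 + \{\psi_i : i \in I'\}$ and this is a normal modal logic containing $L_0$, every theorem of $L_0 + \phi$ (the least normal logic over $L_0$ containing $\phi$) is a theorem of $L_0 + \{\psi_i : i \in I'\}$; that is, $L_0 + \phi \subseteq L_0 + \{\psi_i : i \in I'\}$. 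Combining (ii) and (iii) yields the desired equality $L_0 + \phi = L_0 + \{\psi_i : i \in I'\}$.

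\medskip

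\emph{Main obstacle.} The only genuine subtlety is the claim in step (i) that a derivation uses only finitely many premises. This is really the finitary character of the consequence relation generating normal modal logics: derivations are finite sequences, and each line invokes at most one previously established premise or axiom instance. I expect this to be routine once the deduction apparatus for $\NExt{L_0}$ is made explicit, but it is the conceptual heart of the argument and the step that must be stated carefully. No semantic input (finite model property, Jankov formulas, etc.) is needed; the lemma is purely syntactic and holds for an arbitrary base logic $L_0$.
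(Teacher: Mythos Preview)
Your proof is correct and follows essentially the same approach as the paper. The paper phrases the argument in lattice-theoretic language---observing that the closure operator $\Phi \mapsto L_0 + \Phi$ is algebraic, so $\NExt{L_0}$ is an algebraic lattice and finitely axiomatizable logics are its compact elements---but the underlying reason is exactly your step~(i): derivations are finite, so $\phi \in L_0 + \{\psi_i : i \in I\}$ already yields $\phi \in L_0 + \{\psi_i : i \in I'\}$ for some finite $I'$, after which both arguments conclude with the same two inclusions.
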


\begin{proof}
    Let $\Phi$ be a set of formulas. It is clear from the syntax that, for any $\psi \in L_0 + \Phi$, there is a finite subset $\Phi' \subseteq \Phi$ such that $\psi \in L_0 + \Phi'$. It follows that the closure operator $\Phi \mapsto L_0 + \Phi$ on the set of formulas is algebraic and $\NExt{L_0}$ is an algebraic lattice, and finitely axiomatizable logics are exactly compact elements in $\NExt{L_0}$ (see, e.g., \cite[Theorem 2.30]{UniversalAlgebraFundamentals2011}). 

    If $L_0 + \phi = L_0 + \{\psi_i: i \in I\}$, then since it is a compact element, there is a finite subset $I' \subseteq I$ such that $L_0 + \phi \subseteq L_0 + \{\psi_i: i \in I'\}$. Since $L_0 + \{\psi_i: i \in I'\} \subseteq L_0 + \{\psi_i: i \in I\} = L_0 + \phi$, we obtain $L_0 + \phi = L_0 + \{\psi_i: i \in I'\}$.
\end{proof}

\begin{lemma} \label{Lem decidable fh}
    It is decidable whether a finite modal algebra is of finite height.
\end{lemma}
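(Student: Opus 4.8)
The plan is to exploit the fact that, on a finite algebra, the iterated box of $0$ forms a non-decreasing chain that must stabilize, so that being of finite height can be tested by a terminating computation. Fix a finite modal algebra $\A$ and consider the sequence defined by $b_0 = 0$ and $b_{n+1} = \Box b_n$, so that $b_n = \Boxx{n} 0$. Since $0$ is the least element we have $0 \leq \Box 0$, and since $\Box$ is monotone this lifts by induction to $b_n \leq b_{n+1}$ for all $n$. Hence $(b_n)_{n \in \omega}$ is a non-decreasing chain in the finite lattice underlying $\A$, so it stabilizes: there is some $N$ with $b_N = b_{N+1} = \cdots$. As the chain is strictly increasing up to the point where it stabilizes, and $A$ has only $|A|$ elements, we may take $N \leq |A|$.

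Next I would observe that $\A$ is of finite height iff $b_n = 1$ for some $n$, iff already $b_N = 1$. Indeed, if $b_N < 1$ then $b_m = b_N < 1$ for all $m \geq N$ while $b_m < 1$ for all $m < N$ as well, so no iterate of $\Box$ applied to $0$ ever reaches $1$ and $\A \not\models \Boxx{n}\bot$ for every $n$; conversely, if some $b_n = 1$ then $\A \models \Boxx{n}\bot$, witnessing finite height.

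Finally comes the algorithm: given the finite operation tables of $\A$, compute $b_0, b_1, b_2, \ldots$ one step at a time, halting as soon as either $b_n = 1$ (output: $\A$ is of finite height) or $b_{n+1} = b_n$ (output: $\A$ is not of finite height). Each $b_{n+1}$ is obtained from $b_n$ by a single application of the tabulated operation $\Box$, and the stabilization argument guarantees that one of the two halting conditions is met within $|A|$ steps. The only substantive point is the monotonicity-plus-finiteness observation that reduces the unbounded quantifier ``$\exists n$'' in the definition of finite height to a bounded search; the remaining verifications are routine table lookups, so I do not expect any genuine obstacle here.
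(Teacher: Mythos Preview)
Your argument is correct. Both your proof and the paper's reduce the unbounded search ``$\exists n\,(\Box^n 0 = 1)$'' to a bounded one of size at most $|A|$, but the justifications differ. You exploit the specific starting point $0$: since $0 \leq \Box 0$ and $\Box$ is monotone, the sequence $(\Box^n 0)_n$ is a non-decreasing chain, hence stabilizes within $|A|$ steps. The paper instead argues by pigeonhole that for \emph{any} $a$, the orbit $\{\Box^n a : n \in \omega\}$ is eventually periodic and therefore equals $\{\Box^n a : n < |A|\}$. Your route is slightly cleaner for this particular lemma; the paper's route is more general and is immediately reused in the next lemma (decidability of subdirect irreducibility), where one needs to bound $\Box^{\leq n} a$ for arbitrary $a$ and the monotonicity-from-$0$ trick is not directly available.
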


\begin{proof}
    Let $\A$ be a finite modal algebra. For any $a \in \A$ and any $n, m \in \omega$ such that $n < m$, if $\Box^n a = \Box^m a$, then $\{\Box^{m+k}a: k \in \omega\} = \{\Box^k a: n \leq k \leq m-1\}$. So, $\{\Box^n a : n \in \omega\} = \{\Box^n a : n < |A|\}$ for any $a \in \A$. It follows that $\A$ is of finite height iff there is an $n < |A|$ such that $\Box^n 0 = 1$, which is decidable. 
\end{proof}

\begin{lemma} \label{Lem decidable si}
    It is decidable whether a finite modal algebra is s.i.
\end{lemma}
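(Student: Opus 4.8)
The plan is to reduce subdirect irreducibility to a finite computation via the standard correspondence between congruences of a modal algebra and its open filters. Recall that the congruences of a modal algebra $\A$ are in bijection with its \emph{open filters}, i.e. the filters $F$ with $a \in F \Rightarrow \Box a \in F$, the trivial congruence corresponding to $\{1\}$. By the general theory of subdirectly irreducible algebras, $\A$ is s.i.\ iff it has a monolith, that is, iff the intersection of all open filters properly containing $\{1\}$ is itself $\ne \{1\}$. So the task is to make this intersection effectively computable for finite $\A$.

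First I would describe the open filter $F(a)$ generated by a single element $a$. Since $\Box$ preserves finite meets, one checks that $F(a) = \{b : b \geq \Box^{\leq n} a \text{ for some } n \in \omega\}$. As the sequence $(\Box^{\leq n} a)_{n \in \omega}$ is non-increasing, it stabilizes in the finite algebra $\A$, and by the same counting argument as in \Cref{Lem decidable fh} one gets $\Box^{\leq n} a = \Box^{\leq |A|} a$ for all $n \geq |A|$. Writing $\tau(a) := \Box^{\leq |A|} a$, this shows that $F(a)$ is the principal filter $\up \tau(a)$, which is computable from $\A$.

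Next I would argue that the intersection of all nontrivial open filters equals $\bigcap_{a \neq 1} F(a)$. Every $F(a)$ with $a \neq 1$ is a nontrivial open filter, since it contains $\tau(a) \leq a < 1$, giving one inclusion; conversely, any nontrivial open filter $F$ contains some $a \neq 1$ and hence contains $F(a) \supseteq \bigcap_{b \neq 1} F(b)$, giving the other. Since $\bigcap_{a \neq 1} \up \tau(a) = \up \bigl( \bigvee_{a \neq 1} \tau(a) \bigr)$, I conclude that $\A$ is s.i.\ iff $\bigvee_{a \neq 1} \tau(a) \neq 1$.

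This yields the algorithm: on input a finite modal algebra $\A$, compute $\tau(a) = \Box^{\leq |A|} a$ for each of the finitely many $a \in A$ with $a \neq 1$, form the join $c = \bigvee \{\tau(a) : a \neq 1\}$, and output ``s.i.''\ iff $c \neq 1$. All operations take place in the finite algebra, so the procedure terminates. The main obstacle is not the decidability bookkeeping but getting the algebraic characterization right: the congruence/open-filter correspondence together with the monolith characterization of subdirect irreducibility must be set up carefully. Once these are in place, the reduction to the single join $\bigvee_{a \neq 1} \tau(a)$ and its effective computation are routine.
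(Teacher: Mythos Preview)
Your proof is correct and follows essentially the same approach as the paper: both rest on the opremum characterization of subdirect irreducibility (your join $c=\bigvee_{a\neq 1}\tau(a)$ is precisely the least opremum when one exists) together with the observation that the sequence $\Box^{\leq n}a$ stabilizes by $n<|A|$. The only difference is cosmetic: the paper quotes Rautenberg's characterization directly and searches over candidate $c$, whereas you derive the same criterion from the congruence/open-filter correspondence and compute the canonical $c$ explicitly, making your argument more self-contained.
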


\begin{proof}
    Let $\A$ be a finite modal algebra. We use the characterization in \cite{rautenbergSplittingLatticesLogics1980}: $\A$ is s.i. iff $\A$ has an \emph{opremum}, that is, an element $c \neq 1$ such that for any $a \neq 1$, there is some $n \in \omega$ such that $\Boxx{\leq n} a \leq c$. By the same argument as in the proof of \Cref{Lem decidable fh}, if $\Box^{\leq n} a \leq c$ for some $n \in \omega$, then there must be such an $n < |A|$. So, the existence of an opremum is decidable, and hence so is being s.i. 
\end{proof}

\begin{theorem} \label{Thm decidable union splitting}
    Being a union-splitting is decidable in $\NExt{\K}$. That is, it is decidable, given a formula $\phi$, whether the logic $\K + \phi$ is a union-splitting in $\NExt{\K}$.
\end{theorem}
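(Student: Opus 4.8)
The plan is to show that both the property ``$\K + \phi$ is a union-splitting'' and its complement are semi-decidable (recursively enumerable) in the input $\phi$; since a property that is both semi-decidable and co-semi-decidable is decidable, running the two procedures in parallel (dovetailing) yields the algorithm. Both halting conditions will be read off from the finitary characterization \Cref{Thm characterization of K-union-splittings}(4), together with the compactness of $\NExt{\K}$ recorded in \Cref{Lem finite union splitting}. A preliminary observation that drives the whole argument is that, writing $L = \K + \phi$ and $L' = \K + \{\epsilon(\B) : \B \in \FH\fsi,\ \B \not\models \phi\}$, one always has $L' \subseteq L$ --- this is precisely the inclusion established, without any appeal to hypothesis (4), in the proof of \Cref{Thm characterization of K-union-splittings}. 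Consequently $L$ is a union-splitting iff $L = L'$ iff $\phi \in L'$.

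To semi-decide ``union-splitting'', I would enumerate all finite tuples $(\B_1, \dots, \B_k)$ of finite modal algebras, discarding those for which some $\B_i$ fails to be subdirectly irreducible, of finite height, or a refuter of $\phi$; each of these three tests is decidable by \Cref{Lem decidable si}, \Cref{Lem decidable fh}, and a direct evaluation of $\phi$ on the finite algebra $\B_i$, respectively. For each surviving tuple, the logic $\K + \{\epsilon(\B_1), \dots, \epsilon(\B_k)\}$ is a union-splitting by \Cref{Thm union splitting and Jankov formula} and hence, being finitely axiomatizable, is decidable by \Cref{Cor fa K-union-splittings are decidable}; so I can decide whether $\phi \in \K + \{\epsilon(\B_1), \dots, \epsilon(\B_k)\}$ and halt with output ``union-splitting'' as soon as this succeeds. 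This procedure is correct: since each $\B_i \not\models \phi$ gives $\epsilon(\B_i) \in L' \subseteq L$, every surviving tuple already satisfies $\K + \{\epsilon(\B_1), \dots, \epsilon(\B_k)\} \subseteq L$, so the additional membership $\phi \in \K + \{\epsilon(\B_1), \dots, \epsilon(\B_k)\}$ upgrades this to equality and witnesses that $L$ is a union-splitting. Conversely, if $L$ is a union-splitting then $L = L'$, so $\phi \in L'$, and by \Cref{Lem finite union splitting} finitely many of the defining Jankov formulas already axiomatize $\phi$ over $\K$; that finite tuple is eventually enumerated, so the procedure halts.

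To semi-decide the complement, I would use the contrapositive of \Cref{Thm characterization of K-union-splittings}(4): $L$ fails to be a union-splitting iff there is a finite modal algebra $\A$ with $\A \not\models \phi$ while every $\B \in \H(\A)\fsi \cap \FH$ satisfies $\B \models \phi$. I would therefore enumerate all finite modal algebras $\A$ and, for each, run the following decidable test: check $\A \not\models \phi$ by evaluation; compute the finitely many homomorphic images of $\A$ (the quotients by its finitely many congruences); and verify that each such image which is subdirectly irreducible (\Cref{Lem decidable si}) and of finite height (\Cref{Lem decidable fh}) validates $\phi$. If some $\A$ passes, halt with output ``not a union-splitting''. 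Correctness and termination are again guaranteed by \Cref{Thm characterization of K-union-splittings}(4), which ensures that exactly one of the two procedures halts on any given $\phi$.

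The conceptual crux --- and the step I expect to be the main obstacle --- is not the bookkeeping of the two enumerations but the reduction of an a priori infinitary membership test to a finitary one. The difficulty is that $L'$ is presented by infinitely many Jankov formulas and need not be finitely axiomatizable when $L$ is not a union-splitting, so one cannot directly decide $\phi \in L'$; moreover, testing an inclusion $\K + \{\epsilon(\B_i)\} \subseteq \K + \phi$ would in general require deciding membership in the possibly undecidable logic $\K + \phi$. Both obstructions dissolve through the preliminary observation $L' \subseteq L$: restricting the enumeration to algebras $\B$ with $\B \not\models \phi$ makes the inclusion $\K + \{\epsilon(\B_1), \dots, \epsilon(\B_k)\} \subseteq L$ automatic, so that only the reverse inclusion $\phi \in \K + \{\epsilon(\B_1), \dots, \epsilon(\B_k)\}$ remains to be tested --- and this last test lands inside a finitely axiomatizable union-splitting, which is decidable by \Cref{Cor fa K-union-splittings are decidable}. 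Once this is in place, \Cref{Lem finite union splitting} supplies the finite witness needed for termination, and \Cref{Thm characterization of K-union-splittings}(4) closes the loop on the complementary procedure.
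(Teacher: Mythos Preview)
Your proof is correct and follows the same two-pronged strategy as the paper: semi-decide the property by enumerating candidate finite Jankov axiomatizations (using \Cref{Lem finite union splitting}) and semi-decide its complement by enumerating finite counterexamples to condition~(4) of \Cref{Thm characterization of K-union-splittings}. Your positive half is in fact slightly cleaner than the paper's --- by restricting the enumeration to algebras $\B_i$ with $\B_i \not\models \phi$ and invoking the unconditional inclusion $L' \subseteq L$, you reduce the inner test to the single decidable membership $\phi \in \K + \{\epsilon(\B_i)\}$, whereas the paper checks the full equality $\K + \phi = \K + \{\epsilon(\A_i)\}$, which also requires the genuinely semi-decidable tests $\epsilon(\A_i) \in \K + \phi$.
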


\begin{proof}
    First, we show that the union-splitting problem is $\Sigma^0_1$. By \Cref{Thm characterization of K-union-splittings} and \Cref{Lem finite union splitting}, $\K + \phi$ is a union-splitting iff there is a finite set $\{\A_i: i < n\}$ of finite s.i.~modal algebras of finite height such that $\K + \phi = \K + \{\epsilon(\A_i): i < n\}$. By \Cref{Lem decidable fh} and \Cref{Lem decidable si}, it is decidable whether a finite modal algebra is s.i. and of finite height. So, finite s.i.~modal algebras of finite height and finite sets of them can be effectively enumerated. Moreover, since a finitely axiomatized logic is recursively enumerable, given $\phi$ and a finite set $\{\A_i: i < n\}$ of finite s.i.~modal algebras of finite height, the problem whether $\K + \phi = \K + \{\epsilon(\A_i): i < n\}$ is $\Sigma^0_1$. As $\Sigma^0_1$ is closed under existential quantification, it follows that the union-splitting problem is $\Sigma^0_1$.

    Next, we show that the union-splitting problem is $\Pi^0_1$. By \Cref{Thm characterization of K-union-splittings}, $\K + \phi$ is not a union-splitting iff there is a finite modal algebra $\A$ such that $\H(\A)\fsi \cap \FH \subseteq \V(\K + \phi)$ and $\A \notin \V(K + \phi)$. A finite modal algebra $\A$ only has finitely many s.i.~homomorphic images of finite height, which can be computed by \Cref{Lem decidable fh} and \Cref{Lem decidable si}. So, whether $\K + \phi$ is not a union-splitting is $\Sigma^0_1$, hence the union-splitting problem is $\Pi^0_1$. 

    Thus, the union-splitting problem is both $\Sigma^0_1$ and $\Pi^0_1$, hence decidable.
\end{proof}

We can also provide an intuitive description of an algorithm that decides union-splittings in $\NExt{\K}$ as follows. We start enumerating all finite sets $\{\epsilon(\A_i): i < n\}$ where each $\A_i$ is a finite s.i.~modal algebra of finite height. During the enumeration, for each enumerated finite set, we start verifying whether $\K + \phi = \K + \{\epsilon(\A_i): i < n\}$ holds. If $\K + \phi$ is a union-splitting, then eventually the enumeration will find a finite set $\{\epsilon(\A_i): i < n\}$ that axiomatizes $\K + \phi$ and the identification verification halts. (One might be concerned about the ``nested'' computation here, but this is fine because we have a computable bijection from $\omega$ to $\omega \times \omega$, which is the main reason that $\Sigma^0_1$ is closed under existential quantification.) Simultaneously, we start enumerating all finite modal algebras. For each of them, we compute all its s.i.~homomorphic images of finite height and check if any of them witnesses that $\K + \phi$ breaks the condition (4) in \Cref{Thm characterization of K-union-splittings}. If $\K + \phi$ is not a union-splitting, we will eventually find such a witness. Combining these two, the algorithm decides whether $\K+\phi$ is a union-splitting. 

This algorithm has an advantage that it is constructive, in the sense that if $\K + \phi$ is a union-splitting, then the algorithm outputs a finite set $\{\epsilon(\A_i): i < n\}$ that axiomatizes $\K + \phi$. From this, the decidability of being a splitting in $\NExt{\K}$ follows. 

\begin{theorem} \label{Thm decidable K-splitting}
    Being a splitting is decidable in $\NExt{\K}$. That is, it is decidable, given a formula $\phi$, whether the logic $\K + \phi$ is a splitting in $\NExt{\K}$.
\end{theorem}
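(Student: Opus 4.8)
The plan is to reduce being a splitting to being a union-splitting, exploiting the constructive nature of the algorithm behind \Cref{Thm decidable union splitting} together with \Cref{Thm union splitting and Jankov formula}(1), which says that the splittings in $\NExt{\K}$ are exactly the logics axiomatized over $\K$ by a \emph{single} Jankov formula $\epsilon(\B)$ of a finite s.i.~modal algebra $\B$ of finite height. Since every splitting is a union-splitting, I would first run the union-splitting decision procedure on $\K + \phi$. If $\K + \phi$ is not a union-splitting, it is not a splitting and we reject. If it is a union-splitting, the constructive algorithm outputs a finite set $\{\A_i : i < n\}$ of finite s.i.~modal algebras of finite height with $\K + \phi = \K + \{\epsilon(\A_i) : i < n\}$.

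The crucial observation is that if this union-splitting is in fact a splitting, then the single witnessing algebra must already occur among the $\A_i$. Write $L = \K + \phi$ and suppose $L = \K + \epsilon(\B)$ for some finite s.i.~$\B$ of finite height. By the Proposition relating $\epsilon$ to subalgebras of s.i.~homomorphic images, $\V(L) = \{\A' : \A_i \notin \S(\H(\A')\si) \text{ for all } i\} = \{\A' : \B \notin \S(\H(\A')\si)\}$. Evaluating this equivalence at $\A' = \B$, and using $\B \in \S(\H(\B)\si)$ (as $\B$ is s.i.), yields $\A_{i_0} \in \S(\H(\B)\si)$ for some $i_0 < n$; evaluating it at $\A' = \A_{i_0}$ yields $\B \in \S(\H(\A_{i_0})\si)$. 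Thus $\A_{i_0}$ and $\B$ are each a subalgebra of an s.i.~homomorphic image of the other.

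A short finiteness argument then forces $\A_{i_0} \cong \B$: the two memberships give $|\A_{i_0}| \leq |\B|$ and $|\B| \leq |\A_{i_0}|$, so the intermediate quotient and the embedding are bijective, hence isomorphisms. Consequently $L = \K + \epsilon(\A_{i_0})$ for one of the finitely many algebras already produced. The procedure therefore finishes by checking, for each $i < n$, whether $\K + \phi = \K + \epsilon(\A_i)$, accepting iff some equality holds. Each such check is decidable, since both $\K + \phi$ and $\K + \epsilon(\A_i)$ are finitely axiomatizable union-splittings, hence decidable by \Cref{Cor fa K-union-splittings are decidable}; one simply decides the two inclusions $\epsilon(\A_i) \in \K + \phi$ and $\phi \in \K + \epsilon(\A_i)$.

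The main obstacle is this reduction to finitely many candidates, i.e.~proving that a splitting hiding inside a union-splitting must coincide with one of the displayed factors $\A_{i_0}$; once that bound is in place, everything else is routine, as both the union-splitting test and the finitely many equality tests are decidable by results already in hand. I would take care to confirm that "being a subalgebra of an s.i.~homomorphic image of" is antisymmetric up to isomorphism on finite s.i.~algebras, which is precisely the content of the finiteness argument above.
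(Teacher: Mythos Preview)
Your proof is correct and follows the same overall architecture as the paper's: run the constructive union-splitting algorithm, extract a finite axiomatization $\K+\phi=\K+\{\epsilon(\A_i):i<n\}$, reduce to finitely many candidate splitting algebras, and check each equality using \Cref{Cor fa K-union-splittings are decidable}.

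The difference lies in how the candidate set is bounded. The paper only derives the size bound $|\B|\leq m=\max_i|\A_i|$ (from $\A_i\not\models\epsilon(\B)$ for each $i$) and then enumerates \emph{all} finite s.i.\ algebras of finite height up to size $m$. You go further: by also using the reverse membership $\A_{i_0}\in\S(\H(\B)\si)$ you establish antisymmetry and conclude $\B\cong\A_{i_0}$, so the witnessing algebra must already appear among the $\A_i$. This yields a smaller candidate list ($n$ algebras instead of all algebras of size $\leq m$) and, as a pleasant by-product, re-proves uniqueness of the splitting algebra. The paper's argument is marginally simpler in that it only needs one direction of the Jankov refutation characterization; yours is sharper and arguably more informative. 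One small point you leave implicit but which the paper spells out: when $n=0$ the procedure correctly rejects, since $\K$ itself is not a splitting.
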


\begin{proof}
    First, we run the algorithm from \Cref{Thm decidable union splitting}. If $\K + \phi$ is not a union-splitting, then the algorithm outputs false, and we are done because $\K + \phi$ is not a splitting. Suppose that $\K + \phi$ is a union-splitting. Then the algorithm outputs an axiomatization $\K + \phi = \K + \{\epsilon(\A_i): i < n\}$, where each $\A_i$ is a finite s.i.~modal algebra of finite height. If $n=0$, then $\K + \phi = \K$, which is not a splitting. So, we may assume $n \geq 1$.

    If $\K + \phi$ is a splitting, then $\K + \phi = \K + \epsilon(\B)$ for some finite s.i.~modal algebra $\B$ of finite height. For each $\A_i$, since $\A_i \not\models \epsilon(\A_i)$, we have $\A_i \not\models \epsilon(\B)$, so there is a homomorphic image $\A_i'$ of $\A_i$ such that $\B$ is a subalgebra of $\A_i'$, thus $|B| \leq |A_i|$. Let $m = \max\{|\A_i|: i < n\}$. Then $|B| \leq m$. Thus, $\K + \phi$ is a splitting iff $\K + \{\epsilon(\A_i): i < n\} = \K + \epsilon(\B)$ for some finite s.i.~modal algebra $\B$ of finite height such that $|B| \leq m$. 

    There are only finitely many such $\B$, and we can effectively enumerate all of them. Also, given a $\B$, whether $\K + \{\epsilon(\A_i): i < n\} = \K + \epsilon(\B)$ holds is decidable because both logics are decidable by \Cref{Cor fa K-union-splittings are decidable}. Thus, it is decidable whether $\K + \phi$ is a splitting. 
\end{proof}

Note that in other lattices of normal modal logics, say, $\NExt{\Kf}$ or $\NExt{\Sf}$, the situation is very different. All finite $\Kf$-algebras (resp. $\Sf$-algebras), not only those of finite height, split the lattice $\NExt{\Kf}$ (resp. $\NExt{\Sf}$). It is unknown if \Cref{Thn K-union-splittings have fmp} holds in $\NExt{\Kf}$ or $\NExt{\Sf}$, let alone the decidability of union-splittings.

\section{Axiomatization problems and (un)decidable formulas}

In this section, we apply our decidability result to axiomatization problems and (un)decidable formulas. 

\subsubsection*{Axiomatization problems}

Axiomatization problems are one of the simplest types of decision problems and have been widely studied (see, e.g., \cite[Chapter 17]{czModalLogic1997} and \cite{handbookModalDecision2007}). They are called the \emph{problem of coincidence} in \cite[Chapter 17]{czModalLogic1997}

\begin{definition}
    Given a modal logic $L_0$ and a formula $\phi$, the \emph{axiomatization problem} for $L_0 + \phi$ is, given a formula $\psi$, to decide whether $L_0 + \psi = L_0 + \phi$.
\end{definition}
In other words, the axiomatization problem for $L_0 + \phi$ is decidable iff the property ``$ = L_0 + \phi$'' is decidable in $\NExt{L_0}$. 

The following theorem can be proved by combining Chagrov's method \cite{chagrov1990I, chagrov1990II} and the proof of Blok's dichotomy theorem \cite{blok1978degree}. This is claimed in \cite[Section 17.6]{czModalLogic1997} and a proof can be found in \cite[Theorem 7]{handbookModalDecision2007}.

\begin{theorem} \label{Thm ax problem iff K-union-splitting}
    The axiomatization problem for a logic $\K + \phi$ is decidable iff $\K + \phi$ is a union-splitting in $\NExt{\K}$ or the inconsistent logic.
\end{theorem}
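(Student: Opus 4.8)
The plan is to treat the biconditional as two separate implications. The direction from right to left is where the machinery developed above does the work, and the direction from left to right is the genuinely hard, undecidability direction, which I would argue contrapositively.

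For the right-to-left direction, suppose first that $\K + \phi$ is \emph{inconsistent}, i.e.\ equals $\K + \bot$. Then for any $\psi$ we have $\K + \psi = \K + \phi$ iff $\K + \psi$ is inconsistent, and since consistency is decidable in $\NExt{\K}$ (as recalled in the introduction), the axiomatization problem is decidable in this case. Now suppose $\K + \phi$ is a (necessarily consistent) union-splitting. Given an input $\psi$, I would first apply \Cref{Thm decidable union splitting} to decide whether $\K + \psi$ is itself a union-splitting. If it is not, then $\K + \psi \neq \K + \phi$, because being a union-splitting is a property of the logic (so equality would force $\K+\psi$ to be one too), and the algorithm outputs ``no''. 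If it is, then both $\K + \phi$ and $\K + \psi$ are finitely axiomatizable union-splittings, hence decidable by \Cref{Cor fa K-union-splittings are decidable}; consequently the two membership conditions $\psi \in \K + \phi$ and $\phi \in \K + \psi$ are both decidable, and $\K + \psi = \K + \phi$ holds iff both are satisfied. This settles the easy direction, and it is exactly here that the main decidability result pays off.

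For the converse I would show: if $\K + \phi$ is consistent and \emph{not} a union-splitting, then the axiomatization problem for $\K + \phi$ is undecidable. The starting point is \Cref{Thm characterization of K-union-splittings}: the failure of condition (4) yields a \emph{finite} modal algebra $\A$ with $\H(\A)\fsi \cap \FH \subseteq \V(\K + \phi)$ yet $\A \notin \V(\K + \phi)$. Thus $\A$ refutes $\phi$ while every s.i.\ homomorphic image of $\A$ of finite height validates $\K + \phi$, so the refutation cannot be detected in any finite-height (equivalently, by modal duality, cycle-free) quotient and is forced onto a cyclic portion of the dual frame. This ``essentially infinite-height'' witness is precisely the structural feature that provides the room needed to simulate unbounded computation. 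Following Chagrov's method \cite{chagrov1990I, chagrov1990II} in the style of Blok's dichotomy \cite{blok1978degree}, I would construct from a Turing (or Minsky) machine $T$ a computable family of formulas $\psi_T$ whose frames encode the computation of $T$ on the cyclic witness, arranged so that $\K + \psi_T = \K + \phi$ holds precisely when the computation exhibits a prescribed undecidable behaviour such as non-halting. Decidability of the axiomatization problem for $\K + \phi$ would then decide the halting problem.

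The main obstacle lies entirely in this second direction: the encoding must guarantee that the relationship between $\K + \psi_T$ and $\K + \phi$ flips exactly at the halting condition, which requires controlling all models of $\K + \psi_T$ at once and, crucially, ensuring that the cyclic witness frame cannot be collapsed to a finite-height frame already validating $\phi$. This is the delicate part of Chagrov's simulation, and I would lean on the worked-out argument in \cite[Theorem~7]{handbookModalDecision2007} (cf.\ \cite[Section~17.6]{czModalLogic1997}), adapting its bookkeeping to the present witness supplied by \Cref{Thm characterization of K-union-splittings}. The remaining ingredients — the two reductions, the decidability of consistency, and the appeals to \Cref{Thm decidable union splitting} and \Cref{Cor fa K-union-splittings are decidable} — are routine given the results already established.
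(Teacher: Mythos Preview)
Your proposal is correct, and for the left-to-right direction you do exactly what the paper does: defer to Chagrov's construction as presented in \cite[Theorem~7]{handbookModalDecision2007}, giving a somewhat more detailed sketch of how the cyclic witness from \Cref{Thm characterization of K-union-splittings} feeds into the encoding.

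For the right-to-left direction in the union-splitting case, however, you take a genuinely different route from the paper's \Cref{Lem ax problem for K-union-splitting}. The paper first runs the constructive algorithm of \Cref{Thm decidable union splitting} once on $\phi$ to obtain a Jankov axiomatization $\K+\phi=\K+\{\epsilon(\A_i):i<n\}$, and then, for an input $\psi$, reduces the containment $\K+\phi\subseteq\K+\psi$ to the finitely many model-checking questions $\A_i\not\models\psi$ via the splitting-pair property of $(\K+\epsilon(\A_i),\Log\A_i)$; no decidability of $\K+\psi$ is ever invoked. Your approach instead runs \Cref{Thm decidable union splitting} on the \emph{input} $\psi$, rejects if $\K+\psi$ is not a union-splitting, and otherwise checks both containments using decidability of the two logics. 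This is valid, but be careful with your appeal to \Cref{Cor fa K-union-splittings are decidable}: that corollary only asserts pointwise decidability, whereas deciding $\phi\in\K+\psi$ as $\psi$ varies requires a decision procedure \emph{uniform} in $\psi$. This is easily supplied (Harrop's argument is uniform once the finite axiom and the fmp are known), but it deserves a word. The paper's route avoids this issue entirely and is slightly sharper in that it never needs to test $\K+\psi$ for being a union-splitting; your route, on the other hand, uses \Cref{Thm decidable union splitting} as a black box and does not need the splitting-pair equivalence $\epsilon(\A_i)\in\K+\psi\iff\A_i\not\models\psi$.
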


We provide a proof of the right-to-left direction using our algorithm that decides union-splittings. Because of the constructive nature of the algorithm, unlike the proof in \cite[Theorem 7]{handbookModalDecision2007}, our proof yields an algorithm that, given a formula $\phi$ such that $\K + \phi$ is a union-splitting in $\NExt{\K}$ or the inconsistent logic, outputs an algorithm that decides the axiomatization problem for $\K + \phi$. 

\begin{lemma} \label{Lem ax problem for K-union-splitting}
    Let $\K + \phi$ be a union-splitting in $\NExt{\K}$. Then the axiomatization problem for $\K + \phi$ is decidable. 
\end{lemma}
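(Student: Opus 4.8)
The plan is to exploit the constructive nature of the algorithm from \Cref{Thm decidable union splitting}, which, when $\K + \phi$ is a union-splitting, outputs a concrete finite axiomatization $\K + \phi = \K + \{\epsilon(\A_i): i < n\}$ by Jankov formulas of finite s.i.~modal algebras of finite height. The axiomatization problem asks, given an arbitrary $\psi$, whether $\K + \psi = \K + \phi$. The key observation is that membership in $\K + \phi$ is effectively testable: since $\K + \phi$ is a finitely axiomatizable union-splitting, it is decidable by \Cref{Cor fa K-union-splittings are decidable}. So I can decide one direction of the equality, namely $\K + \psi \supseteq \K + \phi$, by checking whether each generator of $\K + \phi$ (equivalently, $\phi$ itself, or each $\epsilon(\A_i)$) belongs to $\K + \psi$, provided I can decide membership in $\K + \psi$ — but $\K + \psi$ is an arbitrary, possibly undecidable logic, so this direction is the delicate one.

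The way around this is to reduce the equality test to membership tests in the \emph{decidable} logic $\K + \phi$. First I would observe that $\K + \psi = \K + \phi$ holds iff both $\psi \in \K + \phi$ (so that $\K + \psi \subseteq \K + \phi$) and $\K + \phi \subseteq \K + \psi$. The first conjunct, $\psi \in \K + \phi$, is decidable since $\K + \phi$ is decidable. For the second conjunct, the idea is that once $\psi \in \K + \phi$ has been confirmed, $\K + \psi$ is a consistent extension contained in $\K + \phi$, and I want to detect whether it is all of $\K + \phi$. Here I would use the semantic characterization: $\K + \phi \subseteq \K + \psi$ fails iff some algebra validating $\psi$ (hence a $\K+\phi$-algebra once the first test passes) refutes $\phi$. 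Combined with the finite model property of the union-splitting $\K + \phi$ (\Cref{Thn K-union-splittings have fmp}), I can search through finite $\K + \phi$-algebras.

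Concretely, the plan is as follows. Given $\psi$, first test whether $\psi \in \K + \phi$; if not, output ``not equal.'' If yes, then $\K + \psi \subseteq \K + \phi$, so it remains to decide whether $\phi \in \K + \psi$, i.e.~whether every $\K + \psi$-algebra validates $\phi$. Since $\K + \phi$ has the finite model property and $\K + \psi \subseteq \K + \phi$, I would enumerate finite algebras and, using the decidability of $\K + \phi$, separate two cases: I run in parallel a proof search for $\phi \in \K + \psi$ (which is $\Sigma^0_1$, since $\K + \psi$ is recursively enumerable) against a search for a finite algebra $\A$ with $\A \models \psi$ but $\A \not\models \phi$ (witnessing $\phi \notin \K + \psi$). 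The crucial point is that exactly one of these two searches succeeds: if $\phi \in \K + \psi$ the proof search halts, and if $\phi \notin \K + \psi$ then, because $\K + \psi$ is itself a union-splitting (being equal to $\K + \phi$ would make it one, but more robustly, any consistent logic below the union-splitting $\K+\phi$ that fails to contain $\phi$ can be refuted by a finite algebra via the fmp of $\K+\phi$), a finite refuting algebra exists and the second search halts.

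The main obstacle will be justifying that the second search is guaranteed to terminate when $\phi \notin \K + \psi$, since $\K + \psi$ need not have the finite model property on its own. The resolution is that I only reach this search after confirming $\K + \psi \subseteq \K + \phi$, so every finite $\K + \psi$-algebra I might need is in fact a finite $\K + \phi$-algebra, and the finite model property of the \emph{union-splitting} $\K + \phi$ guarantees that if $\phi \notin \K+\psi$ then the gap between $\K+\psi$ and $\K+\phi$ is witnessed by a finite algebra. Making this precise — that the relevant finite-algebra witness for $\phi \notin \K+\psi$ always exists once $\K+\psi \subseteq \K+\phi$ — is where the genuine argument lies; the alternative is to appeal directly to decidability of $\K+\phi$ to enumerate its finite s.i.~algebras of finite height and test whether each is a $\K+\psi$-algebra, using the characterization in \Cref{Thm characterization of K-union-splittings}. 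Once this termination is secured, running the two semidecision procedures in parallel decides $\phi \in \K + \psi$, and hence the axiomatization problem for $\K + \phi$.
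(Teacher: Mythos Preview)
Your parallel-search algorithm would in fact terminate, but the justification you give for termination of the second search is wrong. The finite model property of $\K + \phi$ says that any formula \emph{outside} $\K + \phi$ is refuted on a finite $\K + \phi$-algebra; it says nothing about whether $\phi$ itself can be refuted on a finite model of some logic $\K + \psi \subsetneq \K + \phi$. Your sentence ``any consistent logic below the union-splitting $\K+\phi$ that fails to contain $\phi$ can be refuted by a finite algebra via the fmp of $\K+\phi$'' does not follow from fmp. The ``alternative'' you sketch at the end is also not a decision procedure as written: there are infinitely many finite s.i.~$\K+\phi$-algebras of finite height, so enumerating them and testing $\models \psi$ does not terminate.

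The correct reason your second search halts is the splitting property, and once you see this, the parallel search becomes unnecessary. Write $\K + \phi = \K + \{\epsilon(\A_i): i < n\}$. If $\phi \notin \K + \psi$, then some $\epsilon(\A_i) \notin \K + \psi$; since $(\K + \epsilon(\A_i), \Log \A_i)$ is a splitting pair, this forces $\K + \psi \subseteq \Log \A_i$, i.e.~$\A_i \models \psi$, while $\A_i \not\models \phi$ because $\A_i \not\models \epsilon(\A_i) \in \K + \phi$. Conversely, if $\A_i \not\models \psi$ for all $i$, then each $\epsilon(\A_i) \in \K + \psi$ by the splitting property, so $\K + \phi \subseteq \K + \psi$. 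Thus $\K + \phi \subseteq \K + \psi$ iff $\A_i \not\models \psi$ for all $i < n$, a finite and decidable check. This is exactly the paper's argument: rather than running a $\Sigma^0_1$ proof search against a model search, it reduces the reverse inclusion to the $n$ model-checking questions $\A_i \models \psi$, giving a direct decision procedure.
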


\begin{proof}
    Let $\K + \phi$ be a union-splitting in $\NExt{\K}$. Applying the algorithm from \Cref{Thm decidable union splitting}, we obtain an axiomatization $\K + \phi = \K + \{\epsilon(\A_i): i < n\}$, where each $\A_i$ is a finite s.i.~modal algebra of finite height. If $n = 0$, then $\K + \phi = \K$, and the decidability of the axiomatization problem for $\K + \phi$ follows from the decidability of $\K$. So, we may assume $n \geq 1$. 
    
    Given a formula $\psi$, $\K + \psi = \K + \phi$ iff $\psi \in \K + \{\epsilon(\A_i): i < n\}$ and $\epsilon(\A_i) \in \K + \psi$ for $i < n$. Whether $\psi \in \K + \{\epsilon(\A_i): i < n\}$ holds is decidable by \Cref{Cor fa K-union-splittings are decidable}. To decide whether $\epsilon(\A_i) \in \K + \psi$ holds, note that if $\epsilon(\A_i) \in \K + \psi$ then $\A_i \not\models \psi$ since $\A_i \not\models \epsilon(\A_i)$, and if $\A_i \not\models \psi$, then $\K + \psi \not\subseteq \Log\A_i$, so $\K + \epsilon(\A_i) \subseteq \K + \psi$ since $(\K + \epsilon(\A_i), \Log\A_i)$ is a splitting pair, and thus $\epsilon(\A_i) \in \K + \psi$. So, it suffices to check whether $\A_i \not\models \psi$ holds, which is decidable. Hence, it is decidable whether $\K + \psi = \K + \phi$.
\end{proof}

\begin{lemma} \label{Lem ax problem for inconsistent}
    The axiomatization problem for the inconsistent logic is decidable.
\end{lemma}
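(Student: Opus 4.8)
The plan is to reduce the axiomatization problem for the inconsistent logic to a decision about inconsistency, and then to settle that decision by a concrete finite semantic test. The one completely elementary observation driving everything is that the inconsistent logic is the \emph{unique} inconsistent member of $\NExt{\K}$: any logic containing $\bot$ is already the set of all formulas. Hence, for a given formula $\psi$, the logic $\K + \psi$ equals the inconsistent logic if and only if $\K + \psi$ is inconsistent, i.e.\ if and only if $\K + \psi$ is \emph{not} consistent. Thus the axiomatization problem for the inconsistent logic is exactly the complement of the consistency problem, and it suffices to show that consistency of $\K + \psi$ is decidable.

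For this last step I would invoke the decidability of consistency in $\NExt{\K}$, which is one of the two classical decidable properties already recalled in the introduction, so in principle the lemma follows in one line. To keep the argument self-contained, however, I would instead appeal to Makinson's characterization of the maximal consistent normal modal logics: every consistent normal modal logic is contained in the logic of the reflexive one-point frame or in the logic of the irreflexive one-point frame. Consequently $\K + \psi$ is consistent if and only if $\psi$ is valid on at least one of these two one-point frames. Validity of a fixed formula on a fixed finite frame is decidable — on a one-point frame it reduces to a finite check over the Boolean valuations of the variables occurring in $\psi$ — so consistency, and therefore its complement inconsistency, is decidable.

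Putting the two pieces together yields the algorithm: given $\psi$, evaluate $\psi$ on the reflexive singleton and on the irreflexive singleton; then $\K + \psi$ equals the inconsistent logic precisely when $\psi$ fails on both. This decides the axiomatization problem for the inconsistent logic.

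The reduction itself is immediate from the uniqueness of the inconsistent logic, so all the genuine content sits in the decidability of consistency. I expect the only point needing care to be the justification that consistency may be tested on just the two one-point frames, which rests on Makinson's theorem; if one prefers to cite rather than reprove this, the statement becomes a direct corollary of the already-known decidability of consistency, and no further obstacle arises.
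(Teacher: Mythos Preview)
Your proposal is correct and follows essentially the same approach as the paper: both reduce the axiomatization problem for the inconsistent logic to the decidability of consistency, and both justify the latter via the two co-atoms of $\NExt{\K}$ (the paper cites this fact, while you spell it out as Makinson's theorem and the two one-point frames). Your version is simply a more self-contained rendering of the same argument.
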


\begin{proof}
    It is well-known that the consistency is decidable in $\NExt{\K}$ (see, e.g., \cite[Theorem 17.2]{czModalLogic1997}, as the lattice $\NExt{\K}$ has only two co-atoms and both are decidable. The statement follows as the axiomatization problem for the inconsistent logic coincides with the problem of deciding inconsistency. 
\end{proof}

\begin{proof}[Proof of \Cref{Thm ax problem iff K-union-splitting}]
    The right-to-left direction follows from \Cref{Lem ax problem for K-union-splitting} and \Cref{Lem ax problem for inconsistent}; see \cite[Theorem 7]{handbookModalDecision2007} for the other direction. 
\end{proof}

\begin{corollary}
    It is decidable whether the axiomatization problem for $\K + \phi$ is decidable.
\end{corollary}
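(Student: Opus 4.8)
The plan is to express the property in question as a Boolean combination of decision problems already resolved in the preceding results, and then to invoke closure of the decidable sets under that combination. By \Cref{Thm ax problem iff K-union-splitting}, for a formula $\phi$ the axiomatization problem for $\K + \phi$ is decidable if and only if $\K + \phi$ is a union-splitting in $\NExt{\K}$ or $\K + \phi$ is the inconsistent logic. Consequently, the set of formulas $\phi$ for which the axiomatization problem for $\K + \phi$ is decidable is precisely the union of $\{\phi : \K + \phi \text{ is a union-splitting in } \NExt{\K}\}$ and $\{\phi : \K + \phi \text{ is inconsistent}\}$.

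I would then argue that each of these two sets is decidable. The first is decidable by \Cref{Thm decidable union splitting}, our main result. The second is decidable because consistency is decidable in $\NExt{\K}$ (as recalled in the proof of \Cref{Lem ax problem for inconsistent}): deciding whether $\K + \phi$ is inconsistent is exactly the complement of deciding consistency, and the class of decidable sets is closed under complementation. Since the decidable sets of formulas are also closed under finite union, the union of the two sets is decidable, which is exactly the claim.

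The substantive work having been carried out in the earlier theorems, there is essentially no obstacle remaining: the corollary is a direct consequence of combining the characterization in \Cref{Thm ax problem iff K-union-splitting} with the two decidability facts. The only point that deserves a moment's care is to confirm that ``the axiomatization problem for $\K + \phi$ is decidable'' is genuinely a property of the input formula $\phi$, defining a well-specified set of formulas, so that the elementary closure of decidable sets under union applies verbatim.
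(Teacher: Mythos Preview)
Your argument is correct and follows exactly the same approach as the paper: the paper's proof is the one-line statement that the corollary follows from \Cref{Thm decidable union splitting}, \Cref{Thm ax problem iff K-union-splitting}, and the decidability of consistency, which is precisely the decomposition you spell out. Your only addition is the explicit remark about closure of decidable sets under complement and finite union, which is routine.
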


\begin{proof}
    This follows from \Cref{Thm decidable union splitting}, \Cref{Thm ax problem iff K-union-splitting}, and the fact that consistency is decidable.
\end{proof}

Moreover, the proof yields an algorithm that not only decides whether an axiomatization problem is decidable for a logic $\K + \phi$, but also outputs an algorithm deciding the axiomatization problem. 

\subsubsection*{(Un)decidable formulas}

Undecidable formulas are introduced in \cite{Chagrov1994} (see also \cite[Section 16.4]{czModalLogic1997}).

\begin{definition}
    Let $L_0$ be a logic. A formula $\phi$ is called a \emph{(un)decidable formula} in $\NExt{L_0}$ if it is (un)decidable, given a formula $\psi$, whether $\phi \in L + \psi$.
\end{definition}

From the proof of \Cref{Lem ax problem for K-union-splitting}, we observe that if $\K + \phi$ is a union-splitting in $\NExt{\K}$, then $\phi$ is a decidable formula in $\NExt{\K}$: we first compute an axiomatization $\K + \phi = \K + \{\epsilon(\A_i): i < n\}$, then for any formula $\psi$, $\phi \in \K + \psi$ iff $\A_i \not\models \psi$ for all $i$, which is a decidable condition. Also, $\bot$ is a decidable formula in $\NExt{\K}$ as the consistency is decidable. Moreover, the proof of \Cref{Thm ax problem iff K-union-splitting} in \cite[Theorem 7]{handbookModalDecision2007} in fact established the converse: if $\K + \phi$ is neither a union-splitting nor the inconsistent logic in $\NExt{\K}$, then $\phi$ is an undecidable formula in $\NExt{\K}$. Thus, we obtain the following theorem.

\begin{theorem} \label{Thm decidable fml iff K-union-splitting}
    A formula $\phi$ is a decidable formula in $\NExt{\K}$ iff $\K + \phi$ is a union-splitting in $\NExt{\K}$ or the inconsistent logic.
\end{theorem}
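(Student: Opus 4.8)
The plan is to prove the biconditional of \Cref{Thm decidable fml iff K-union-splitting} by splitting into the two directions, mostly assembling the observations already made in the paper rather than proving anything genuinely new. For the right-to-left direction, I would argue each disjunct separately. If $\K + \phi$ is a union-splitting, then by the algorithm of \Cref{Thm decidable union splitting} I can effectively compute an axiomatization $\K + \phi = \K + \{\epsilon(\A_i): i < n\}$ with each $\A_i$ a finite s.i.~modal algebra of finite height. Then for an arbitrary input formula $\psi$, I claim $\phi \in \K + \psi$ iff $\A_i \not\models \psi$ for every $i < n$. The key equivalence here is exactly the splitting-pair reasoning used in \Cref{Lem ax problem for K-union-splitting}: since $(\K + \epsilon(\A_i), \Log \A_i)$ is a splitting pair, $\epsilon(\A_i) \in \K + \psi$ iff $\A_i \not\models \psi$, and $\phi \in \K + \psi$ is equivalent to having all the $\epsilon(\A_i) \in \K + \psi$. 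Each condition $\A_i \not\models \psi$ is decidable because $\A_i$ is finite, so $\phi$ is a decidable formula. For the other disjunct, if $\K + \phi$ is the inconsistent logic, then $\phi \in \K + \psi$ simply asks whether $\K + \psi$ is inconsistent, which is decidable since consistency is decidable in $\NExt{\K}$.

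For the left-to-right direction, I would take the contrapositive: assume $\K + \phi$ is neither a union-splitting nor the inconsistent logic, and show $\phi$ is an undecidable formula. This is where I expect the genuine mathematical content to live, but the paper signals that this direction is \emph{not} to be re-proved here: the construction is attributed to the proof of \Cref{Thm ax problem iff K-union-splitting} as given in \cite[Theorem 7]{handbookModalDecision2007}, which combines Chagrov's encoding method \cite{chagrov1990I, chagrov1990II} with the machinery of Blok's dichotomy \cite{blok1978degree}. So the honest plan is to cite that reference for the undecidability of $\phi$ in this case, noting that the cited argument in fact produces an undecidable \emph{formula}, i.e.\ it establishes the stronger formula-level statement and not merely undecidability of the axiomatization problem for $\K+\phi$.

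Assembling these, the proof reads as a short synthesis: the right-to-left direction is a direct packaging of the observations preceding the theorem statement, and the left-to-right direction is delegated to the existing literature. The main obstacle, were one to give a self-contained argument, would be the left-to-right direction, since it requires the full strength of Chagrov's undecidability construction to exhibit, for a $\phi$ that fails both conditions, an effective reduction of an undecidable problem (such as the halting problem or a word problem for semigroups) to the question ``$\phi \in \K + \psi$?''\ as $\psi$ ranges over formulas. That construction is delicate and is exactly what \cite{handbookModalDecision2007} carries out, so I would not reproduce it; I would simply verify that the cited result yields undecidability at the level of membership $\phi \in \K + \psi$ rather than only at the level of coincidence $\K + \psi = \K + \phi$, which is the formulation needed here.
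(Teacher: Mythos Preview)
Your proposal is correct and follows essentially the same approach as the paper: the right-to-left direction is argued via the Jankov axiomatization from \Cref{Thm decidable union splitting} together with the splitting-pair equivalence $\epsilon(\A_i) \in \K + \psi \Leftrightarrow \A_i \not\models \psi$ (and decidability of consistency for the inconsistent case), while the left-to-right direction is delegated to \cite[Theorem 7]{handbookModalDecision2007}, with your correct observation that the cited argument already yields undecidability at the formula level rather than merely at the coincidence level.
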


\begin{corollary}
    It is decidable whether $\phi$ is a (un)decidable formula in $\NExt{\K}$.
\end{corollary}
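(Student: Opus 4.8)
The plan is to reduce this directly to the characterization just established together with the two decidability results already in hand. By \Cref{Thm decidable fml iff K-union-splitting}, a formula $\phi$ is a decidable formula in $\NExt{\K}$ precisely when $\K + \phi$ is either a union-splitting in $\NExt{\K}$ or the inconsistent logic. Since being an undecidable formula is by definition the negation of being a decidable formula, it suffices to decide membership in the class of decidable formulas; deciding the complement then comes for free.

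First I would invoke \Cref{Thm decidable union splitting} to obtain an algorithm that, given $\phi$, decides whether $\K + \phi$ is a union-splitting in $\NExt{\K}$. Next I would use the decidability of consistency in $\NExt{\K}$ (as recalled in the proof of \Cref{Lem ax problem for inconsistent}, cf. \cite[Theorem 17.2]{czModalLogic1997}) to decide whether $\K + \phi$ is the inconsistent logic. Running both procedures and taking their disjunction yields a decision procedure for the property ``$\K + \phi$ is a union-splitting or the inconsistent logic,'' since the class of decidable sets is closed under finite unions.

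Finally, combining this with the characterization of \Cref{Thm decidable fml iff K-union-splitting} shows that the set of decidable formulas in $\NExt{\K}$ is decidable, and hence so is its complement, the set of undecidable formulas. I do not anticipate any genuine obstacle at this stage: all the substantive work has been carried out in \Cref{Thm decidable union splitting} (the decidability of being a union-splitting) and in the characterization \Cref{Thm decidable fml iff K-union-splitting}, so the corollary is a routine composition of these with the known decidability of consistency.
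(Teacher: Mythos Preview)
Your proposal is correct and follows essentially the same approach as the paper: invoke \Cref{Thm decidable fml iff K-union-splitting} to reduce the question to deciding whether $\K+\phi$ is a union-splitting or inconsistent, then appeal to \Cref{Thm decidable union splitting} and the decidability of consistency. The paper's proof is simply a one-line citation of these same three ingredients.
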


\begin{proof}
    This follows from \Cref{Thm decidable union splitting}, \Cref{Thm decidable fml iff K-union-splitting}, and the fact that consistency is decidable.
\end{proof}

This answers \cite[Problem 17.3]{czModalLogic1997} for $\NExt{\K}$ in the affirmative.

Moreover, we also obtain a somewhat mysterious equivalence between decidable axiomatization problems, decidable formulas, and union-splittings in $\NExt{\K}$. Given that the proof of the undecidability of various logical properties \cite[Theorem 9]{handbookModalDecision2007} makes an essential use of union-splittings, union-splittings may have a deeper connection to the decision problem of logical properties.

\begin{corollary} \label{Cor equivalent}
    For any formula $\phi$, the following are equivalent:
    \begin{enumerate}
        \item the axiomatization problem for $\K + \phi$ is decidable,
        \item $\phi$ is a decidable formula,
        \item $\K + \phi$ is a union-splittings in $\NExt{\K}$ or the inconsistent logic.
    \end{enumerate}
\end{corollary}

\begin{proof}
    This follows from \Cref{Thm ax problem iff K-union-splitting} and \Cref{Thm decidable fml iff K-union-splitting}.
\end{proof}

\begin{remark}
    A Kripke complete logic $L$ is called \emph{strictly Kripke complete} (see, e.g., \cite[Section 10.5]{czModalLogic1997}) if no other logic has the same class of Kripke frames as $L$. Blok \cite{blok1978degree} showed that a logic $L$ is strictly Kripke complete iff $L$ is a union-splitting or the inconsistent logic. Thus, strictly Kripke completeness is equivalent to each item in \Cref{Cor equivalent} and is also decidable. 
\end{remark}

\subsection*{}
\begin{acknowledgements}
The author is very grateful to Nick Bezhanishvili for his supervision on the Master's thesis and his valuable comments on this paper. The author was supported by the Student Exchange Support Program (Graduate Scholarship for Degree Seeking Students) of the Japan Student Services Organization and the Student Award Scholarship
of the Foundation for Dietary Scientific Research.
    
\end{acknowledgements}

\printbibliography[heading=bibintoc,title=References]

\end{document}